\def\blfootnote{\xdef\@thefnmark{}\@footnotetext}
\newtheorem{theorem}{Theorem}[section]
\newtheorem{lemma}[theorem]{Lemma}
\newtheorem{proposition}[theorem]{Proposition}
\newtheorem{corollary}[theorem]{Corollary}
\newtheorem{problem}[theorem]{Problem}
\theoremstyle{definition}
\let\leq=\leqslant
\let\geq=\geqslant
\begin{document}

\title{\boldmath Nonsoluble and non-$p$-soluble length\\ of finite groups}

\author{E. I. Khukhro}

\address{Sobolev Institute of Mathematics, Novosibirsk, 630\,090,
Russia}
 \email{khukhro@yahoo.co.uk}

\author{P. Shumyatsky}

\address{Department of Mathematics, University of Brasilia, DF~70910-900, Brazil}

\email{pavel@unb.br}

\keywords{finite groups, $p$-length, nonsoluble length, $p$-soluble, finite simple groups, Fitting height}
\subjclass{20D30, 20E34}


\begin{abstract}
Every finite group $G$ has a normal series each of whose factors either is soluble or is a direct product of nonabelian simple groups. We define the nonsoluble length $\lambda (G)$  as the minimum number of nonsoluble factors in a series of this kind. Upper bounds for $\lambda (G)$  appear in the study of various problems on finite, residually finite, and profinite groups.
We prove that $\lambda (G)$ is bounded in terms of the maximum $2$-length of soluble subgroups of $G$, and that $\lambda (G)$
is bounded by the maximum Fitting height of soluble subgroups. For an odd prime $p$,  the  non-$p$-soluble length $\lambda _p(G)$ is introduced, and it is proved that $\lambda _p(G)$ does not exceed
the maximum $p$-length of $p$-soluble subgroups. We conjecture that for a given prime $p$ and a given proper group variety ${\frak V}$ the  non-$p$-soluble length $\lambda _p(G)$  of finite groups $G$ whose Sylow $p$-subgroups belong to ${\frak V}$ is bounded.
   In this paper we prove this conjecture for any variety
   that is a product of several soluble varieties and varieties of finite exponent.
  As an application of the results obtained, an error is corrected  in the proof of the main result of the second author's paper  ``Multilinear commutators in residually finite groups'', \emph{Israel J. Math.} \textbf{189} (2012), 207--224.
\end{abstract}

\maketitle

\blfootnote{This work was supported by CNPq-Brazil. The first author thanks  CNPq-Brazil and the University of Brasilia for support and hospitality that he enjoyed during his visits to Brasilia.}

\section{Introduction}

Every finite group $G$ has a normal series each of whose factors either is soluble or is a direct product of nonabelian simple groups. We define the \emph{nonsoluble length} of $G$, denoted by $\lambda (G)$,  as the minimum number of nonsoluble factors in a series of this kind: if
$$
1= G_0\leq G_1\leq \dots \leq G_{2h+1}=G
$$
is a shortest normal series in which  for $i$  odd  the factor $G_{i+1}/G_{i}$ is soluble (possibly trivial), and for $i$ even   the factor $G_{i+1}/G_{i}$   is a (non-empty) direct product of nonabelian simple groups, then the nonsoluble length $\lambda (G)$ is equal to $h$.  For any prime $p$, we introduce a similar notion of non-$p$-soluble length $\lambda _p (G)$  by replacing ``soluble'' by ``$p$-soluble''.  Recall that a finite group is said to be \emph{$p$-soluble} if it has a normal series each of whose factors is either a $p$-group or a $p'$-group; the least number of $p$-factors in such a series is called the \emph{$p$-length} of the group.
 Of course, $\lambda (G)= \lambda_2 (G)$, since groups of odd order are soluble by the Feit--Thompson theorem \cite{fei-tho}.

 Upper bounds for the nonsoluble and non-$p$-soluble length appear in the study of various problems on finite, residually finite, and profinite groups. For example, such  bounds were implicitly obtained in the Hall--Higman paper \cite{ha-hi} as part of their reduction of the Restricted Burnside Problem to $p$-groups. Such bound were also a part of Wilson's  theorem \cite{wil83} reducing the problem of local finiteness of periodic profinite groups to pro-$p$-groups. (Both problems were solved by Zelmanov \cite{zel89, zel90, zel91, zel92}). More recently, bounds for the nonsoluble length were needed in the study of verbal subgroups in finite groups \cite{dms1, 68, austral}.

 In the present paper we show that information on soluble ($p$-soluble) subgroups of a finite group $G$ can be used for obtaining upper bounds for the nonsoluble (non-$p$-soluble) length of $G$.  We prove that the nonsoluble length $\lambda (G)$ is bounded in terms of the maximum $2$-length of soluble subgroups of $G$, and for $p\ne 2$ the non-$p$-soluble length $\lambda _p(G)$ is bounded by the maximum $p$-length of $p$-soluble subgroups.

 \begin{theorem}\label{t-2l}
\emph{(a)} The nonsoluble length $\lambda (G)$ of a finite group $G$  does not exceed $2L_2+1$, where $L_2$ is the maximum $2$-length of soluble subgroups of $G$.

\emph{(b)} For $p\ne 2$, the non-$p$-soluble length $\lambda_p (G)$ of a finite group $G$  does not exceed the maximum $p$-length of $p$-soluble subgroups of $G$.
 \end{theorem}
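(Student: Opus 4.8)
The plan is to prove both parts by establishing the equivalent quantitative statement: a finite group $G$ with $\lambda_p(G)=h$ must contain a $p$-soluble subgroup whose $p$-length is at least $h$ (for odd $p$), respectively a soluble subgroup of $2$-length at least $(h-1)/2$ (for $p=2$). I would argue by induction on $h=\lambda_p(G)$. The first step is a reduction of the ambient structure: passing to $\bar G=G/R$, where $R$ is the $p$-soluble radical, one has $F^*(\bar G)=\mathrm{Soc}(\bar G)=V=S_1\times\cdots\times S_n$, a direct product of nonabelian simple groups each of order divisible by $p$, with $C_{\bar G}(V)=Z(V)=1$, so that $\bar G\hookrightarrow\operatorname{Aut}(V)$. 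Because a $p$-soluble subgroup $\bar H\le\bar G$ of $p$-length $\ell$ lifts to a subgroup $H\le G$ with $H\cap R$ $p$-soluble and $\ell_p(H)\ge\ell$, it suffices to work inside $\bar G$; hence I may assume $R=1$ and $F^*(G)=V$.

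Next I would set up the inductive step through the permutation action on the simple factors. Let $K=\bigcap_i N_G(S_i)$ be the kernel of the action of $G$ on $\{S_1,\dots,S_n\}$, so that $G/K\hookrightarrow\operatorname{Sym}(n)$ and $V\le K$. Since $K$ induces on each $S_i$ a group of automorphisms lying between $\operatorname{Inn}(S_i)$ and $\operatorname{Aut}(S_i)$, the Schreier conjecture (a consequence of the classification of finite simple groups) gives that $K/V$ is soluble; hence $\lambda_p(G/K)=\lambda_p(G/V)\ge\lambda_p(G)-1$, the last inequality being the trivial direction obtained by prepending the single non-$p$-soluble layer $V$. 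By the induction hypothesis applied to $G/K$ there is a $p$-soluble subgroup $\bar H\le G/K$ with $\ell_p(\bar H)\ge\lambda_p(G)-1$. Pulling $\bar H$ back to $G$ and applying a Frattini argument to a suitably chosen invariant $p$-local subgroup $U$ of $V$ (so that a lift $\tilde H$ of $\bar H$ normalizes $U$), I would assemble a $p$-soluble subgroup $H=U\cdot\tilde H$.

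The heart of the argument is the \emph{boosting step}: showing that the contribution of the layer $V$ genuinely raises the $p$-length by one (for odd $p$). The point is that a $p$-element coming from $\tilde H$ acts nontrivially modulo $p'$ on the $p$-local structure of the simple factors, so it cannot be absorbed into an existing $p'$-layer; the tool that certifies this is the Hall--Higman theorem \cite{ha-hi}, which controls the action of a $p$-element on a $p'$-section of a $p$-soluble group. Iterating the two previous paragraphs down the whole series then produces, inside one $p$-soluble subgroup, a chain $P_hQ_hP_{h-1}Q_{h-1}\cdots$ of alternating $p$- and $p'$-sections with every $p$-section nontrivial, which is exactly a witness to $p$-length $\ge h$.

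The main obstacle, and the reason parts (a) and (b) are not identical, is the exceptional prime $p=2$ in the Hall--Higman theorem. For odd $p$ the boosting step yields one genuine new $p$-step from each non-$p$-soluble layer, giving the clean bound of part (b). For $p=2$ a $2$-element may act on the relevant $2'$-section with minimal polynomial of small degree (the exceptional quadratic/linear behaviour), so a single layer need not force a new $2$-step; one is then obliged to consume up to two consecutive non-$2$-soluble layers to guarantee one genuine increase of the $2$-length, and an extra loss occurs at the top of the series. This is precisely what produces the weaker estimate $\lambda(G)\le 2L_2+1$ in part (a), and keeping the loss down to only this factor $2$ and additive constant $1$ is the most delicate part of the proof. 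Throughout, the Feit--Thompson theorem \cite{fei-tho} is used to ensure that every non-$p$-soluble layer genuinely has order divisible by the relevant prime, so that the layers can contribute to $p$-length at all.
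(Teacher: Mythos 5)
Your reduction (pass to $G/R_p(G)$, let $K$ be the kernel of the action on the simple factors of the socle, apply Schreier's conjecture to make $K/V$ soluble, and induct on $\lambda_p$) is exactly the skeleton of the paper's argument for part (b), and the target statement --- exhibit a $p$-soluble subgroup whose $p$-length grows with $\lambda_p(G)$ --- is the right one. The genuine gap is the boosting step, which is the entire difficulty of the theorem. You need a $p$-soluble subgroup $N\le G$ that simultaneously (i) covers $G/K$, so that $N/(N\cap K)$ inherits the $p$-length produced by induction, and (ii) satisfies $O_{p',p}(N)\le K$, so that the new layer is not absorbed into the first terms of the upper $p$-series of $N$. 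Your ``suitably chosen invariant $p$-local subgroup $U$ of $V$'' is never specified, and Hall--Higman is not the tool that produces it: those theorems concern linear representations of $p$-soluble groups and do not explain why a $p$-element surviving into $V$ must contribute a fresh $p$-layer. What the paper actually uses (for odd $p$) is Thompson's normal $p$-complement theorem: it chooses a characteristic subgroup $C_1$ of a Sylow $p$-subgroup of $S_1$ such that $N_{S_1}(C_1)$ has no normal $p$-complement, sets $C=\prod C_i$ over a $G$-orbit, gets $VN_G(C)=G$ by Frattini, and then shows $O_{p',p}(N_G(C))\le K$ precisely because an element of $O_{p',p}$ moving some $S_i$ would force $N_{S_i}(C_i)$ into a group with a normal $p$-complement. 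Without such a certificate, claim (ii) is unsupported and the induction does not close.

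For part (a) the situation is worse: Thompson's criterion is unavailable at $p=2$, and your explanation of the constant $2L_2+1$ (``consume two layers per genuine increase of $2$-length'' because of Hall--Higman exceptional behaviour) describes an obstacle rather than an argument. The paper takes a genuinely different route here: it introduces the $(2\times 2')$-length of soluble subgroups (shortest normal series with factors that are direct products of a $2$-group and a $2'$-group), proves by a minimal-counterexample argument that $L_{2\times 2'}(G/K)<L_{2\times 2'}(G)$ --- using Frattini arguments on Sylow $q$-subgroups of the socle for an adaptively chosen prime $q$ and the lower $(2\times 2')$-series --- concludes $\lambda(G)\le L_{2\times 2'}(G)$, and then obtains $2L_2+1$ from the trivial inequality $L_{2\times 2'}\le 2L_2+1$. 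A further small point: Feit--Thompson is not what guarantees that the simple layers have order divisible by $p$ for odd $p$; that requirement is built into the definition of $\lambda_p$.
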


     A bound for the non-$p$-soluble length $\lambda_p (G)$ of a finite group in terms of the maximum $p$-length of its $p$-soluble subgroups can be achieved by similar arguments as in the proof of  part (a) of Theorem~\ref{t-2l}. We state part (b) separately, with a
different proof, in order to achieve a better bound.

As a consequence of the proof  of Theorem~\ref{t-2l}(a) we obtain the following bound for the nonsoluble length in terms of the maximum Fitting height of soluble subgroups.

 \begin{corollary}
 \label{c-fh}
The nonsoluble length $\lambda (G)$ of a finite group $G$  does not exceed the maximum Fitting height of soluble subgroups of $G$.
 \end{corollary}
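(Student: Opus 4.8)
The aim is to revisit the soluble subgroup built in the proof of Theorem~\ref{t-2l}(a) and to show that, although it is used there only to bound the $2$-length, it in fact has Fitting height at least $\lambda(G)$: each nonsoluble factor of $G$ should contribute one term to its Fitting series, whereas in the $2$-length estimate two nonsoluble factors are spent on a single $2$-layer. I would first reduce to the case where the generalised Fitting subgroup is semisimple. Let $R$ be the soluble radical of $G$. Since $R$ is soluble, $\lambda(G/R)=\lambda(G)$, and the full preimage in $G$ of a soluble subgroup of $G/R$ of Fitting height $d$ is soluble of Fitting height at least $d$; hence it suffices to bound $\lambda(\bar G)$ for $\bar G:=G/R$, where $F(\bar G)=1$, so that $V:=F^{*}(\bar G)=E(\bar G)$ is a direct product of nonabelian simple groups with $C_{\bar G}(V)=1$ and $\bar G\le\operatorname{Aut}(V)$. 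I would then argue by induction on $|\bar G|$, the case $\bar G=1$ being trivial.

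For the inductive step, $V$ is the bottom nonsoluble layer of $\bar G$ and $\lambda(\bar G/V)\ge\lambda(\bar G)-1$, so the inductive hypothesis furnishes a soluble subgroup $\bar U\le\bar G/V$ with $h(\bar U)\ge\lambda(\bar G)-1$. The key step, which is exactly the per-layer construction in the proof of Theorem~\ref{t-2l}(a), is to pull $\bar U$ back through the nonsoluble layer $V$: one selects a soluble subgroup $U$ of the preimage of $\bar U$ that maps onto $\bar U$ and meets $V$ in a nontrivial nilpotent normal subgroup $W:=U\cap V$. Locating such a $W$ equivariantly is where the structure of the action of $\bar G\le\operatorname{Aut}(V)$ on the product of simple groups is used — including Schreier's conjecture, by which the nonsolubility of $\bar G/V\le\operatorname{Out}(V)$ is carried entirely by the permutation of the simple factors — and one builds $W$ from suitable soluble (for instance unipotent or $2$-local) subgroups of the factors.

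It then remains to establish the Fitting-height count $h(U)=h(\bar U)+1\ge\lambda(\bar G)$. Since $W$ is nilpotent and normal, $W\le F(U)$ and $h(U)\le h(U/W)+1=h(\bar U)+1$; the substance is the reverse inequality, which amounts to $F(U)=W$, i.e.\ to the assertion that no nilpotent normal subgroup of $U$ has nontrivial image in $\bar U\cong U/W$. Equivalently, the extension of $W$ by $\bar U$ must be Fitting-reduced, so that the full height $h(\bar U)$ survives on top of the new bottom layer $W$. I expect this to be the main obstacle: guaranteeing that the nilpotent section contributed by the simple factors at each nonsoluble layer genuinely increments the Fitting height rather than being absorbed into the Fitting series of $\bar U$. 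Controlling it is precisely the role of the detailed information on simple groups underlying Theorem~\ref{t-2l}(a); granting it, the induction produces a soluble subgroup $U\le\bar G$ with $h(U)\ge\lambda(\bar G)=\lambda(G)$, and undoing the first reduction bounds $\lambda(G)$ by the maximum Fitting height of a soluble subgroup of $G$.
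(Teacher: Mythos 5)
Your plan inverts the logic of the paper and, in doing so, runs into a step that you yourself flag as ``the main obstacle'' but never close: the claim that the pulled-back soluble subgroup $U$ satisfies $F(U)=W$, so that its Fitting height is $h(\bar U)+1$ rather than merely $h(\bar U)$. Nothing in the proof of Theorem~\ref{t-2l}(a) supplies such a construction --- that proof never builds soluble subgroups of prescribed Fitting height; it argues in the opposite direction, showing that the parameter $L_{2\times 2'}$ strictly drops on passing to $G/K(G)$ (Lemma~\ref{sisis}). Concretely, if $U_0$ is the preimage of $\bar U$, the natural Frattini-type choice is $U=N_{U_0}(T)$ for $T$ a Sylow $2$-subgroup of $V$: this $U$ is soluble and covers $\bar U$, but $U\cap V=N_V(T)$ is only ($2$-group)-by-(odd order), not nilpotent; and even after shrinking $W$ to $T$, there is no reason why the preimage in $U$ of $F(\bar U)$ should fail to be nilpotent. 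If it is nilpotent, the Fitting height does not increment and your induction collapses. Establishing $h(U)\ge h(\bar U)+1$ for some equivariant choice of $U$ is a genuinely delicate statement that the proposal leaves unproved, so as written this is not a proof.

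The corollary does not need any of this. The paper's proof is two lines: for a soluble group the lower Fitting series is a $(2\times 2')$-series, because every nilpotent group is the direct product of its Sylow $2$-subgroup and its Hall $2'$-subgroup; hence $l_{2\times 2'}(H)\le h(H)$ for every soluble subgroup $H$, so $L_{2\times 2'}(G)\le h$, and Theorem~\ref{t-2ll} gives $\lambda(G)\le L_{2\times 2'}(G)\le h$. In other words, the ``one layer per nonsoluble factor'' count that you hoped to realize by an explicit construction is already encoded in the inequality $\lambda(G)\le L_{2\times 2'}(G)$, which is exactly what Theorem~\ref{t-2ll} (the engine behind Theorem~\ref{t-2l}(a)) provides.
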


Obviously,  Theorem~\ref{t-2l} can be applied in all situations where the $p$-length of $p$-soluble subgroups is known to be bounded. For example, such bounds are given by the well-known theorems
when either the exponent, or the derived length of a Sylow $p$-subgroup is bounded. For $p\ne 2$, these are the original Hall--Higman
theorems \cite{ha-hi}, and for $p=2$ the results by Hoare \cite{hoa}, Gross \cite{gro}, Berger and Gross \cite{ber-gro}, and, with best possible bounds, by Bryukhanova \cite{bry79, bry81}.

There is a long-standing problem on $p$-length due to Wilson (problem 9.68 in Kourovka Notebook \cite{kour}): \emph{for a given prime $p$ and a given proper group variety ${\frak V}$, is there a bound for the $p$-length of finite $p$-soluble groups whose Sylow $p$-subgroups belong to ${\frak V}$?}

We state here a problem on non-$p$-soluble length by analogy with Wilson's problem.

\begin{problem}\label{prob}
For a given prime $p$ and a given proper group variety ${\frak V}$, is there a bound for the non-$p$-soluble length $\lambda _p$ of finite groups whose Sylow $p$-subgroups belong to ${\frak V}$?
\end{problem}

By Theorem~\ref{t-2l} an affirmative answer to Problem~\ref{prob} would follow from an affirmative answer to the aforementioned Wilson's problem. Wilson's problem  so far has seen little progress beyond the aforementioned affirmative answers for soluble varieties and varieties of bounded exponent (and, implicit in the Hall--Higman theorems \cite{ha-hi}, for ($n$-Engel)-by-(finite exponent) varieties). The next step would be some combination of solubility and exponent, but, for example, Wilson's problem remains open for (finite exponent)-by-soluble  varieties (but known for  soluble-by-(finite exponent)). Our Problem~\ref{prob} may be more tractable. In the present paper we obtain an affirmative answer in the case of any variety that is a product of several soluble varieties and varieties of finite exponent.
Recall that the product ${\frak V}{\frak W}$ of two varieties is the variety of groups having a normal subgroup in  ${\frak V}$ with quotient in ${\frak W}$; this is  generalized to more than two factors in obvious fashion. The variety of groups of exponent $n$ is denoted by ${\frak B}_{n}$,
and the variety of soluble groups of derived length $d$ by ${\frak A}^{d}$.

 \begin{theorem}\label{t-exp-sol} Let $p$ be a prime and let $\frak V$ be a variety that is a product of several soluble varieties and varieties of finite exponent. Then the non-$p$-soluble length $\lambda _p(G)$ of finite groups whose Sylow $p$-subgroups belong to ${\frak V}$ is bounded. More precisely, if a Sylow $p$-subgroup of a finite group $G$ belongs to the variety
${\frak B}_{p^{a_1}}{\frak A}^{d_1}\cdots {\frak B}_{p^{a_n}}{\frak A}^{d_n}$ for some integers $a_i,d_i\geq 0$, then the non-$p$-soluble length $\lambda _p(G)$ is bounded in terms of $\sum a_i+\sum d_i$.
 \end{theorem}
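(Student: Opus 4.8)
The plan is to prove the sharper, quantitative statement by induction on the number of variety factors, equivalently on $N:=\sum a_i+\sum d_i$, peeling off one soluble or one finite-exponent factor at a time. Throughout I write $\mu(\frak U)$ for a (to be produced) bound on $\lambda_p$ valid for all finite groups whose Sylow $p$-subgroups lie in $\frak U$, and I record the elementary subadditivity $\lambda_p(G)\le \lambda_p(N)+\lambda_p(G/N)$ for $N\trianglelefteq G$ (concatenate the two series), together with the fact that varieties are closed under subgroups and quotients, so that the Sylow $p$-subgroup of any section of $G$ again lies in the ambient variety. The base case is a single factor, $\frak V={\frak A}^{d}$ or $\frak V={\frak B}_{p^{a}}$: here the classical theorems --- the Hall--Higman theorems for $p$ odd, and those of Hoare, Gross, Berger--Gross and Bryukhanova for $p=2$ --- bound the $p$-length of a $p$-soluble group whose Sylow $p$-subgroup has bounded derived length or bounded exponent, and Theorem~\ref{t-2l} then converts this into a bound for $\lambda_p$. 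It is essential to note that only these \emph{single-factor} instances of Wilson's problem, which are classically known, will be invoked; the argument never appeals to Wilson's problem for product varieties, consistently with the fact that Problem~\ref{prob} is genuinely weaker than Wilson's problem.

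For the inductive step write $\frak V=\frak V_1\frak W$, where $\frak V_1$ is the leading factor (a single ${\frak A}^{d_1}$ or ${\frak B}_{p^{a_1}}$) and $\frak W$ is the product of the remaining factors, for which $\mu(\frak W)$ is finite by induction. Let $P$ be a Sylow $p$-subgroup of $G$ and let $Q=w_{\frak W}(P)$ be the $\frak W$-verbal subgroup of $P$, i.e. the smallest normal subgroup of $P$ with $P/Q\in\frak W$. Since $P\in\frak V_1\frak W$ there is $R\trianglelefteq P$ with $R\in\frak V_1$ and $P/R\in\frak W$, whence $Q\le R$ and therefore $Q\in\frak V_1$; moreover $Q$ is characteristic in $P$ and $P/Q\in\frak W$. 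The goal of the step is the combination estimate $\lambda_p(G)\le \mu(\frak W)+c(\frak V_1)$, where $c(\frak V_1)$ is controlled by the single-factor bound for $\frak V_1$. The mechanism I would use is to separate the $h=\lambda_p(G)$ nonsoluble layers of a shortest series into those ``seen'' by the quotient $P/Q\in\frak W$ and those ``consumed'' by $Q\in\frak V_1$: the former are at most $\mu(\frak W)$ in number, while from the latter I would build a soluble (or $p$-soluble) section whose Sylow $p$-subgroup lies inside a conjugate of $Q$, hence in the \emph{single} variety $\frak V_1$, and whose $p$-length is at least their number; the classical single-factor bound for $\frak V_1$ then caps this count. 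Feeding the resulting estimate back through the induction bounds $\lambda_p(G)$ in terms of $N$.

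The main obstacle is precisely the passage between the two viewpoints in that last step: $Q$ is only a \emph{characteristic} subgroup of the \emph{non-normal} subgroup $P$, so in general there is no normal subgroup $M\trianglelefteq G$ with $M\cap P=Q$, and the ``normal part'' $\frak V_1$ of the variety of $P$ need not be realized by any normal section of $G$. Consequently one cannot simply split $G$ as $M$ by $G/M$ and invoke subadditivity. To circumvent this I would argue through sections rather than through a single normal splitting: working in a chief series refining the nonsoluble series, I would track the image of $Q$ in each nonsoluble factor $L_i$ and its induced Sylow $p$-subgroup, and use the Hall--Higman-type control of the action of $p$-elements of $P$ on the chief factors $L_i$ to certify that each nonsoluble layer in which the image of $Q$ acts nontrivially genuinely increases the $\frak V_1$-complexity of the associated soluble section. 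Making this bookkeeping precise --- in particular, extracting from the ``$Q$-active'' layers a single soluble section with Sylow $p$-subgroup in $\frak V_1$ and $p$-length equal to their number --- is where I expect the real technical work to lie, and it is the step that must be engineered so as to call only upon the classical single-factor results and the inductive bound $\mu(\frak W)$.
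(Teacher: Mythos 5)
Your overall architecture --- induction on the number of variety factors, a base case handled by the classical Hall--Higman/Hoare/Gross/Berger--Gross/Bryukhanova theorems combined with Theorem~\ref{t-2l}, and a step that peels off the innermost factor ${\frak V}_1$ --- matches the paper's. But the inductive step, which you correctly identify as the crux, is left as a programme rather than a proof, and the route you sketch for it is unlikely to close. You propose to extract from the ``$Q$-active'' nonsoluble layers a single $p$-soluble section whose Sylow $p$-subgroup lies inside a conjugate of $Q\in{\frak V}_1$ and whose $p$-length is at least the number of such layers. Nothing classical produces such a section: the $p$-soluble subgroups of large $p$-length that witness Theorem~\ref{t-2l}(b) (built via Thompson's normal $p$-complement theorem as normalizers $N_G(C)$) have Sylow $p$-subgroups contained only in a conjugate of $P$, not of the characteristic subgroup $Q$, so the single-factor bound for ${\frak V}_1$ cannot be applied to them. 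Moreover, ``Hall--Higman-type control of the action of $p$-elements on chief factors'' is a tool for $p$-soluble chief factors; the layers you need to control here are direct products of nonabelian simple groups, where that machinery says nothing.

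What actually makes the step work in the paper is a different, purely combinatorial mechanism that your proposal does not contain: one studies the permutation action of the normal subgroup $N\trianglelefteq P$ (your $Q$) on the simple factors $S_1,\dots,S_r$ of the socle of $G/R_p(G)$ and proves (Lemmas~\ref{l-dlinn}, \ref{l-exp}, \ref{l-abel}, \ref{l-cl2}) that suitable verbal subgroups of $N$ --- namely $(N^{(e)})^{p^{e-1}}$ when $N$ has exponent $p^e$, and $N^{(d-1)}$ or $(N^{(d-1)})^2$ when $N$ has derived length $d$ --- are forced into the $p$-kernel subgroup $K_p(G)$, whose non-$p$-soluble length is at most $1$. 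This converts the non-normality of $P$ into a statement about genuine normal subgroups of $G$ (the iterated kernels $K_{p,i}$) and strictly decreases the leading parameter of the variety containing the Sylow $p$-subgroup of $G/K_{p,i}(G)$, at the cost of a controlled increase in the later parameters; the case $p=2$ requires the additional analysis of Lemma~\ref{l-cl2} because an abelian normal subgroup of $P$ need not lie in $K(G)$, only its square does. Without these orbit lemmas, or a substitute for them, your passage from ${\frak V}_1{\frak W}$ to ${\frak W}$ does not go through.
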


We did not exclude trivial factors for greater generality of the statement; so in fact the first (nontrivial) factor may be either a variety of finite exponent, or a soluble variety.

The proofs in this paper use the classification of finite simple groups in its application to Schreier's Conjecture, that the outer automorphism groups of finite simple groups are soluble.

At the end of the paper, we apply Corollary~\ref{c-fh} for correcting an error  in the proof of one of the lemmas in the second author's paper  \cite{896}, which also has a bearing on the papers \cite{dms1,austral}.  It was proved in \cite{896}  that under the hypotheses of the main theorem all soluble subgroups of a finite group have bounded Fitting height. This is why Corollary~\ref{c-fh} can be applied to provide an alternative  proof of the result in \cite{896}.

\section{Preliminaries}

 Let $p$ be a prime, and $G$ a finite group. Let
$$
1= G_0\leq G_1\leq \dots \leq G_{2h+1}=G
$$
be a shortest series such that $G_{i+1}/G_{i}$ is $p$-soluble (possibly trivial) for $i$ odd, and is a (non-empty) direct product of nonabelian simple groups of order divisible by $p$ for $i$ even. Then $h$ is defined to be the \emph{non-$p$-soluble length} of $G$,  denoted by $\lambda_p (G)$. For $p=2$ we speak about the \emph{nonsoluble length} denoted by  $\lambda (G)= \lambda_2 (G)$. It is easy to see that the  non-$p$-soluble length behaves well under taking normal subgroups, homomorphic images, and (sub)direct products. It is also clear that an extension of a normal subgroup of non-$p$-soluble length $k$ by a group of non-$p$-soluble length $l$ has non-$p$-soluble length at most $k+l$.

The soluble radical of a group $G$, the largest normal soluble subgroup, is denoted by $R(G)$.  The largest normal $p$-soluble subgroup is called the  \emph{$p$-soluble radical} denoted by  $R_p(G)$.

Consider the quotient $\bar G=G/R_p(G)$ of a finite group $G$ by it $p$-soluble radical. The socle $Soc(\bar G)$, that is, the product of all minimal normal subgroups of $\bar G$, is a direct product  $Soc(\bar G)=S_1\times\dots\times S_m$ of nonabelian simple groups $S_i$ of order divisible by $p$.
The group $G$ induces by conjugation a permutational action on the set $\{S_1, \dots , S_m\}$. Let $K_p(G)$ denote the kernel of this action, which we give a technical name of the \emph{$p$-kernel subgroup} of $G$; of course, $K_p(G)$ is the full inverse image in $G$ of $\bigcap N_{\bar G}(S_i)$. For $p=2$ we speak of the \emph{kernel subgroup} of $G$ denoted by $K(G)$.

\begin{lemma}\label{l-kern}
The $p$-kernel subgroup $K_p(G)$ has non-$p$-soluble length at most 1.
\end{lemma}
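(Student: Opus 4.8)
The plan is to pass to the quotient $\bar G = G/R_p(G)$, where by construction the $p$-soluble radical is trivial and, as recalled just above the lemma, $\mathrm{Soc}(\bar G)=S_1\times\cdots\times S_m$ is a direct product of nonabelian simple groups $S_i$ of order divisible by $p$. Writing $\bar K = K_p(G)/R_p(G)=\bigcap_i N_{\bar G}(S_i)$, it is enough to exhibit a normal series of $K_p(G)$ with a single nonsoluble factor; since $R_p(G)$ is $p$-soluble, this reduces to proving that $\bar K$ admits a normal series $1\le \mathrm{Soc}(\bar G)\le \bar K$ whose top factor $\bar K/\mathrm{Soc}(\bar G)$ is soluble. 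Note first that $\mathrm{Soc}(\bar G)\le \bar K$: each $S_j$ normalizes itself and centralizes every $S_i$ with $i\ne j$, so $S_j\le \bigcap_i N_{\bar G}(S_i)=\bar K$, whence $\mathrm{Soc}(\bar G)=\prod_i S_i\le\bar K$.

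The first key step is to show that $C_{\bar G}(\mathrm{Soc}(\bar G))=1$. Set $C=C_{\bar G}(\mathrm{Soc}(\bar G))$, a normal subgroup of $\bar G$. Since the $S_i$ are nonabelian simple, $C\cap \mathrm{Soc}(\bar G)=Z(\mathrm{Soc}(\bar G))=1$. If $C$ were nontrivial, then, being a nontrivial normal subgroup of $\bar G$, it would contain a minimal normal subgroup of $\bar G$; but every minimal normal subgroup is contained in $\mathrm{Soc}(\bar G)$, which would force $C\cap \mathrm{Soc}(\bar G)\ne 1$, a contradiction. Hence $C=1$. This is the one place where the hypothesis $R_p(\bar G)=1$ is essential, through the fact that all minimal normal subgroups are collected inside the nonabelian socle.

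The second key step uses Schreier's conjecture. Because $\bar K$ normalizes each $S_i$, conjugation gives a homomorphism $\bar K\to \prod_i \mathrm{Aut}(S_i)$, and composing with the projection onto the outer automorphism groups yields $\phi\colon \bar K\to \prod_i \mathrm{Out}(S_i)$. By Schreier's conjecture each $\mathrm{Out}(S_i)$ is soluble, so the image of $\phi$ is soluble. I claim $\ker\phi=\mathrm{Soc}(\bar G)$. The inclusion $\mathrm{Soc}(\bar G)\le \ker\phi$ is clear, since every element of $\mathrm{Soc}(\bar G)$ induces an inner automorphism on each $S_i$. Conversely, given $x\in\ker\phi$, for each $i$ choose $s_i\in S_i$ inducing the same automorphism of $S_i$ as $x$ does; then $s=s_1\cdots s_m\in\mathrm{Soc}(\bar G)$ and $xs^{-1}$ centralizes every $S_i$, so $xs^{-1}\in C_{\bar G}(\mathrm{Soc}(\bar G))=1$ and $x\in\mathrm{Soc}(\bar G)$.

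Combining the two steps, $\bar K/\mathrm{Soc}(\bar G)\cong\phi(\bar K)$ is soluble. Pulling back to $K_p(G)$, I obtain the normal series $1\le R_p(G)\le T\le K_p(G)$, where $T$ is the full preimage of $\mathrm{Soc}(\bar G)$: here $R_p(G)$ is $p$-soluble, the factor $T/R_p(G)=\mathrm{Soc}(\bar G)$ is a direct product of nonabelian simple groups of order divisible by $p$, and $K_p(G)/T\cong \bar K/\mathrm{Soc}(\bar G)$ is soluble, hence $p$-soluble; all terms are normal in $K_p(G)$. This is precisely a series witnessing $\lambda_p(K_p(G))\le 1$. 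I expect the step requiring most care to be the identification $\ker\phi=\mathrm{Soc}(\bar G)$, which is exactly where the triviality of $C_{\bar G}(\mathrm{Soc}(\bar G))$ is invoked; the appeal to Schreier's conjecture (and thus to the classification) is the only non-elementary ingredient.
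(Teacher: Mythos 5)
Your proof is correct and follows essentially the same route as the paper: reduce modulo $R_p(G)$, map $K_p(G)$ into the (outer) automorphism groups of the simple factors $S_i$, invoke Schreier's Conjecture, and use $C_{\bar G}(\mathrm{Soc}(\bar G))=1$. The paper phrases this as an embedding of $K/\bigcap C_K(S_i)$ into the direct product of the groups $K/C_K(S_i)$, while you identify the kernel of the map to $\prod_i \mathrm{Out}(S_i)$ and write down the explicit three-term normal series, but the content is the same.
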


\begin{proof}
We can assume that $R_p(G)=1$. Let $K=K_p(G)$ for short. For every $i$, clearly, $K/C_K(S_i)$ embeds in the automorphism group of the nonabelian simple group $S_i$. Hence $K/C_K(S_i)S_i$ is soluble by Schreier's Conjecture as a consequence of the classification.  The result follows, since $K/\bigcap C_K(S_i)$ embeds in the direct product of the $K/C_K(S_i)$ and $\bigcap C_K(S_i)=1$ because $R_p(G)=\nobreak 1$.
\end{proof}

Given a prime $p$ and a finite group $G$, we define higher $p$-kernel subgroups by induction: $K_{p,1}(G)= K_{p}(G)$ and $K_{p,i+1}(G)$ is the full inverse image of $K_{p}(G/K_{p,i}(G))$. For $p=2$ we speak of higher kernel subgroups denoted by $K_{i}(G)=K_{2,i}(G)$. We may drop the indication of the group when it causes no confusion writing simply $K_{p,i}$ and $K_{i}$. Obviously, the non-$p$-soluble length of $K_{p,i}$ is $i$.

\section{Non-$p$-soluble length of a group and $p$-length of soluble subgroups}

First we prove Theorem~\ref{t-2l}(a) concerning nonsoluble length and $2$-length.
Rather than working with $2$-length of soluble subgroups, we found it expedient to introduce a slightly different technical parameter.
Every soluble finite group $G$ has a normal series each of whose factors is a direct product of a $2$-group and a $2'$-group, of which either may be trivial, but not both; we call such a series a \emph{$(2\times 2')$-series}.  We define the \emph{$(2\times 2')$-length} of $G$ to be the length of a shortest series of this kind and denote it by $l_{2\times 2'}$.
Obviously, the $(2\times 2')$-length of a subgroup or a quotient does not exceed the $(2\times 2')$-length of a soluble group; this parameter also behaves well under passing to (sub)direct products.
If $l_2$ is the ordinary $2$-length of $G$, then obviously $l_{2\times 2'}\leq 2l_2+1$, since, say, an upper $2$-series is also a $(2\times 2')$-series.

Now let $G$ be an arbitrary (not necessarily soluble) finite group. Let $L_{2\times 2'}(G)$ denote the maximum $({2\times 2'})$-length of soluble subgroups of $G$. This parameter also behaves well under passing to subgroups, homomorphic images, and (sub)direct products.

\begin{lemma}\label{facto}
If $N$ is a normal subgroup of a finite group $G$, then $L_{2\times 2'} (G/N)\leq L_{2\times 2'} (G)$.
\end{lemma}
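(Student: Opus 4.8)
The plan is to deduce the lemma from the principle that every soluble subgroup of the quotient $G/N$ is the image of a soluble subgroup of $G$. Concretely, suppose we know that for every subgroup $\bar H=H/N$ of $G/N$ (where $N\le H\le G$) with $\bar H$ soluble there is a \emph{soluble} subgroup $S\le H$ with $SN=H$. Then $\bar H=H/N\cong S/(S\cap N)$ is a homomorphic image of the soluble group $S$, so by the behaviour of the $(2\times 2')$-length under quotients of soluble groups we get $l_{2\times 2'}(\bar H)=l_{2\times 2'}(S/(S\cap N))\le l_{2\times 2'}(S)\le L_{2\times 2'}(G)$. Taking the maximum over all soluble $\bar H\le G/N$ yields $L_{2\times 2'}(G/N)\le L_{2\times 2'}(G)$, as required. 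Thus everything reduces to producing soluble supplements to $N$ in the relevant subgroups.

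To set up an induction, I would first reduce to the case where $N$ is a minimal normal subgroup of $G$. Choosing a chief series $1=N_0<N_1<\dots<N_r=N$ of $G$ lying below $N$ and writing $G/N$ as an iterated quotient, the inequality telescopes: it suffices to prove $L_{2\times 2'}(X/M)\le L_{2\times 2'}(X)$ whenever $M$ is a minimal normal subgroup of a finite group $X$, and then apply this successively to $X=G/N_i$ and $M=N_{i+1}/N_i$. A minimal normal subgroup $M$ is either elementary abelian or a direct product of isomorphic nonabelian simple groups, and these two cases are treated separately.

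The abelian case is immediate: if $M$ is elementary abelian and $\bar H=H/M$ is a soluble subgroup of $X/M$, then $H$ is an extension of the abelian group $M$ by the soluble group $\bar H$, hence soluble, and one may simply take $S=H$. The nonabelian case is the crux, since then $H$ itself is never soluble and a genuine supplement must be constructed. I would first pass to the centralizer: $C=C_H(M)$ is normal in $H$, meets $M$ trivially, and embeds into $H/M$, so $C$ is soluble; replacing $H$ by $H/C$ reduces to the situation $M=\mathrm{Inn}(M)\le H\le \mathrm{Aut}(M)$ with $C_H(M)=1$ (a soluble supplement upstairs pulls back to one in $H$ because $C$ is soluble). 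Now Schreier's Conjecture enters decisively: since $M=T_1\times\dots\times T_k$ with all $T_i$ nonabelian simple and isomorphic to a single $T$, we have $\mathrm{Aut}(M)=\mathrm{Aut}(T)\wr \mathrm{Sym}(k)$ and $\mathrm{Out}(M)=\mathrm{Aut}(M)/M=\mathrm{Out}(T)\wr \mathrm{Sym}(k)$ is soluble. It then remains to build a soluble subgroup of $H$ mapping onto $H/M\le \mathrm{Out}(M)$: one uses a soluble supplement to $\mathrm{Inn}(T)$ inside $\mathrm{Aut}(T)$ (which exists precisely because $\mathrm{Out}(T)$ is soluble), assembles these across the simple factors, and lifts through the soluble permutation action of $H$ on $\{T_1,\dots,T_k\}$.

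I expect the construction of the soluble supplement in the nonabelian case to be the main obstacle; it is exactly the point where the classification-dependent solubility of the outer automorphism groups of finite simple groups is indispensable. Without it $\mathrm{Out}(M)$ could fail to be soluble and no soluble supplement need exist, whereas the abelian case and all the reductions (passage to chief factors, passage to the centralizer quotient, and the comparison of $(2\times 2')$-lengths under quotients of soluble groups) are routine.
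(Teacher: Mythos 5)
Your opening reduction is exactly right and is also how the paper proceeds: it suffices, for each subgroup $K$ with $N\le K\le G$ and $K/N$ soluble, to produce a soluble subgroup $H\le K$ with $NH=K$, since then $K/N\cong H/(H\cap N)$ is a quotient of the soluble group $H$ and $l_{2\times 2'}$ does not increase under such quotients. The problem is that you never actually produce the supplement. After reducing to a nonabelian minimal normal subgroup $M=T_1\times\cdots\times T_k$ and factoring out $C_H(M)$, you assert that a soluble supplement to $\mathrm{Inn}(T)$ inside $\mathrm{Aut}(T)$ ``exists precisely because $\mathrm{Out}(T)$ is soluble''. That is a non sequitur: solubility of a quotient $X/Y$ does not formally yield a soluble subgroup of $X$ covering $X/Y$ --- the existence of such a supplement is precisely the statement your whole reduction is meant to establish, now specialised to $Y=\mathrm{Inn}(T)$ inside the relevant subgroups of $\mathrm{Aut}(T)$, so as written the argument bottoms out in an unproved instance of itself. (The assertion is true, but the known proofs are either the Frattini/Feit--Thompson argument below or case-by-case knowledge of $\mathrm{Aut}(T)$.) The subsequent ``assembly across the simple factors'' and ``lifting through the permutation action'' is also nontrivial and not carried out: the per-factor supplements must be chosen inside $H$, compatibly with one another, and then extended by a soluble subgroup of $H$ covering its image in $\mathrm{Sym}(k)$; none of this is automatic. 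Incidentally, your claim that $\mathrm{Out}(M)=\mathrm{Out}(T)\wr\mathrm{Sym}(k)$ is soluble is false for $k\ge 5$; fortunately you only need solubility of $H/M$, which is the hypothesis, and Schreier's Conjecture is in fact not needed for this lemma at all.

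The paper obtains the supplement in one step, with no chief series and no structure theory of $\mathrm{Aut}(M)$: let $T$ be a Sylow $2$-subgroup of $N$ and set $H=N_K(T)$. The Frattini argument gives $K=NH$, and $H$ is soluble because $H\cap N=N_N(T)$ has the normal Sylow $2$-subgroup $T$ with quotient of odd order (hence soluble by Feit--Thompson), while $H/(H\cap N)\cong K/N$ is soluble by hypothesis. If you replace everything from your chief-series reduction onwards by this Frattini argument, your first paragraph completes the proof.
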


\begin{proof}  Let $K/N$ be a soluble subgroup of $G/N$.
We wish to show that $l_{2\times 2'}(K/N)\leq L_{2\times 2'}(G)$.
Let $T$ be a Sylow 2-subgroup of $N$ and let $H=N_K(T)$. Then $K=NH$ by the Frattini argument. Using the Feit--Thompson theorem \cite{fei-tho} it is easy to see that $H$ is a soluble subgroup of $G$.
Since $K/N$ is a homomorphic image of $H$, we have $l_{2\times 2'}(K/N)\leq l_{2\times 2'}(H)\leq L_{2\times 2'}(G)$.
\end{proof}

Given a finite soluble group $G$, we define the \emph{lower $(2\times 2')$-series} by induction: let $D_1(G)=G$ and let $D_{i+1}(G)$ be the smallest normal subgroup of $D_{i}(G)$ such that $D_i/D_{i+1}$ is a direct product of a $2$-group and a $2'$-group. Clearly, the $D_i(G)$ are characteristic subgroups of $G$. If $h=L_{2\times 2'}(G)$, then $D_{h+1}=1$ and $D_h$ is a direct product of a $2$-group and a $2'$-group.
It is easy to see that if $H$ is a subgroup of $G$, then $D_i(H)\leq D_i(G)$, and if $H$ is a normal subgroup of $G$, then $D_i(G/H)\leq D_i(G)H/H$.

\begin{lemma}\label{sisis}
Let $G$ be a finite group
and let $V$ be a non-empty  product of nonabelian minimal normal subgroups of $G$. Let $V=S_1\times S_2\times\cdots\times S_m$, where the $S_i$ are simple factors of $V$. Let $K=\bigcap_i N_G(S_i)$. Then $L_{2\times 2'}(G/K)<L_{2\times 2'}(G)$.
\end{lemma}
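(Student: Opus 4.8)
The plan is to take a soluble subgroup $\bar H=H/K$ of $G/K$ realising $L_{2\times 2'}(G/K)=l$ (if $l=0$ then $G/K=1$, since a nontrivial group has a nontrivial cyclic subgroup, and the claim is clear because $V\neq 1$ has even order, so $G$ has a nontrivial, hence $(2\times 2')$-nontrivial, soluble subgroup), and to build from it a soluble subgroup $M\le G$ with $l_{2\times 2'}(M)\ge l+1$. Here $K$ is exactly the kernel of the permutation action of $G$ on $\{S_1,\dots,S_m\}$, so $\bar H$ acts \emph{faithfully} on the $S_i$; moreover $V\le K\le H$, since every element of $V$ normalises each $S_i$. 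I will produce the extra $(2\times 2')$-layer of $M$ from a suitable Sylow subgroup of $V$, exploiting the nonsolubility of the $S_i$ (their even order and the existence of odd prime divisors).

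I look at the bottom term $\bar D_l=D_l(\bar H)$ of the lower $(2\times 2')$-series of $\bar H$ and write $\bar D_l=A\times B$ with $A$ its $2$-part and $B$ its $2'$-part; as $\bar D_l\neq 1$, at least one of $A,B$ is nontrivial. I choose a prime $r$ of the type \emph{opposite} to a nontrivial part of $\bar D_l$: if $B\neq 1$ put $r=2$ (which divides every $|S_i|$), and if $A\neq 1$ take an odd prime $r=q$ dividing $|S_{i_0}|$ for some factor $S_{i_0}$ moved by $A$. Let $U$ be a Sylow $r$-subgroup of $V$; it is nontrivial and $U=\prod_i U_i$ with $U_i=U\cap S_i$. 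By the Frattini argument $H=V\,N_H(U)=K\,N_H(U)$, so $N_H(U)$ maps onto $\bar H$. Since $\bar H$ is soluble, some soluble $R\le N_H(U)$ still maps onto $\bar H$: indeed, if $\phi\colon X\to Y$ is onto with $Y$ soluble then some soluble $R\le X$ has $\phi(R)=Y$ — in a minimal counterexample no proper subgroup maps onto $Y$, so $\ker\phi$ lies in every maximal subgroup, i.e. $\ker\phi\le\Phi(X)$, whence $X/\Phi(X)$ is soluble and so is $X$, a contradiction. Now set $M=UR$: then $U\trianglelefteq M$ is an $r$-group, $M/U\cong R/(R\cap U)$ is soluble, and the restriction $\phi\colon M\to\bar H$ of $G\to G/K$ is onto with $U\le\ker\phi=M\cap K$.

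The length jump is the heart of the matter. Writing $W=O_2(M)\times O_{2'}(M)$ for the largest normal $(2\times 2')$-subgroup of $M$ and using that the upper and lower $(2\times 2')$-series have the same length, one has $l_{2\times 2'}(M)=1+l_{2\times 2'}(M/W)$; since $M/W$ maps onto $\bar H/\bar E_1$ with $\bar E_1:=\phi(W)=\phi(O_2(M))\,\phi(O_{2'}(M))$, it suffices to prove $l_{2\times 2'}(\bar H/\bar E_1)\ge l$, for which — using $D_l(\bar H/\bar E_1)=\bar D_l\bar E_1/\bar E_1$ (the reverse of the inclusion recorded above, a routine induction) — it is enough to show $\bar D_l\not\le\bar E_1$. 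The key point is that $M$ permutes the Sylow factors $U_i$ through its faithful action on the $S_i$, so an element centralising $U$ acts trivially on the $S_i$ with $U_i\neq 1$; combined with $[O_2(M),O_{2'}(M)]=1$ this forces the wrong-type Fitting component into $\ker\phi$ on that support. Concretely, if $B\neq 1$ (so $r=2$), then $O_{2'}(M)$ centralises $U\le O_2(M)$, hence $O_{2'}(M)\le C_M(U)\le\ker\phi$, so $\bar E_1=\phi(O_2(M))$ is a $2$-group and cannot contain the nontrivial $2'$-part $B$; and if $A\neq 1$ (so $r=q$ odd), then $O_2(M)$ centralises $U\le O_{2'}(M)$, forcing $\phi(O_2(M))$ into the kernel of $\bar H$ on the support $\{i:q\mid|S_i|\}$, while $A$, being a $2$-group contained in the $(2\times 2')$-group $\bar E_1$, would then lie in $\phi(O_2(M))$ and act trivially on that support, contradicting the choice of $q$. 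Either way $\bar D_l\not\le\bar E_1$, giving $l_{2\times 2'}(M)\ge l+1$ and hence $L_{2\times 2'}(G)>L_{2\times 2'}(G/K)$.

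The main obstacle I expect is exactly this non-merging step: stacking a $2$-group below the bottom $2$-layer of $\bar H$ never increases the $(2\times 2')$-length, so the new layer must be taken of the type opposite to a nontrivial part of $\bar D_l$, and one must check it is not absorbed into the top $(2\times 2')$-layer of $M$. The faithfulness of the $G/K$-action on $\{S_1,\dots,S_m\}$ — the defining feature of $K$ — is precisely what prevents this absorption. A minor extra subtlety is that an odd prime $q$ need not divide every $|S_i|$, which is why in that case I pass to the support $\{i:q\mid|S_i|\}$ and use that $A$ preserves it (conjugate factors are isomorphic) and moves it nontrivially.
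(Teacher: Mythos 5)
Your proof is correct, and it takes a genuinely different route from the paper's. The paper argues by minimal counterexample: it first reduces to the case where $V$ is minimal normal with $C_G(V)=1$, invokes the Schreier conjecture to make $K/V$ (and then $G/V$) soluble, and splits according to whether $T=N_G(Q)$ is soluble, the nonsoluble case requiring a separate induction through the normalizers $N_{S_i}(Q\cap S_i)$. You instead build the longer soluble subgroup directly: a soluble $\bar H\le G/K$ of maximal $(2\times 2')$-length is covered, via the Frattini argument and the standard fact that a soluble quotient is covered by a soluble subgroup, by a soluble $R\le N_H(U)$, and the extra layer of $M=UR$ is extracted from $W=O_2(M)\times O_{2'}(M)$. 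This eliminates the induction on $|G|$ and, notably, all use of the Schreier conjecture (hence of the classification) in this particular lemma. Both arguments run on the same engine --- choose the Sylow prime of $V$ opposite in type to the bottom $(2\times 2')$-layer, and exploit that an element centralizing a nontrivial $U\cap S_i$ must normalize $S_i$ --- but your coprime-commuting step $[O_2(M),O_{2'}(M)]=1$ replaces the paper's observation that $P$ and $Q$ are normal subgroups of $T=N_G(Q)$ of coprime orders. The auxiliary facts you defer (the soluble lift through $\Phi(X)$, the equality $D_i(\bar H/\bar E_1)=D_i(\bar H)\bar E_1/\bar E_1$, and $l_{2\times 2'}(M)=1+l_{2\times 2'}(M/W)$) are all routine and check out, as does your handling of the support $\{i: q\mid |S_i|\}$ in the odd case, where you correctly only need the single moved factor $S_{i_0}$ rather than membership of $\phi(O_2(M))$ in $K$.
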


\begin{proof} Let $G$ be a counter-example of minimal possible order. Suppose that  $V=V_1\times V_2$, where $V_1=S_1\times \cdots\times S_k$ and $V_2=S_{k+1}\times \cdots\times S_m$ are normal subgroups of $G$. Let $K_1=\bigcap_{i=1}^k N_G(S_i)$ and $K_2=\bigcap_{i=k+1}^m N_G(S_i)$. It is easy to see that the subgroup $K_j/V_i$ plays the same role for $G/V_i$ and $V_jV_i/V_i$ for $\{i,j\}=\{1,2\}$ as $K$ for $G$ and $V$. By induction, $L_{2\times 2'}(G/K_j)<L_{2\times 2'}(G/V_i)\leq L_{2\times 2'}(G)$ by Lemma \ref{facto}. Since $G/K$ embeds in $G/K_1\times G/K_2$, this contradicts the assumption that $G$ is a counter-example. Hence $V$ is a minimal normal subgroup in $G$, and acting by conjugation $G$ transitively permutes  $S_1,  S_2, \dots , S_m$. By a similar argument, the minimality of $G$ also implies that $C_G(V)=1$. 
Hence $K/V$ is soluble by the Schreier Conjecture.

Let $L_{2\times 2'} (G)=h$. If $H/K$ is a soluble subgroup  of $G/K$,  Lemma \ref{facto} shows that $L_{2\times 2'} (H/K)\leq h$. Since $G$ is a counterexample, there is such a subgroup with $L_{2\times 2'} (H/K)=h$. By minimality of $|G|$, we have $G=H$, so that $G/K$ is soluble. Since $K/V$ is soluble, we conclude that $G/V$ is soluble.

In order to avoid repeating the same arguments, we now choose in a special way a prime $q$ dividing  $|V|$. Namely,  if $D_h(G/K)$ is not  $2$-group, then  we put $q=2$ using the fact that $V$ is nonsoluble. If, however,  $D_h(G/K)$ is a $2$-group, then we choose an odd divisor $q$ of $|V|$. Choose a Sylow $q$-subgroup $Q$ in $V$ and let $T=N_G(Q)$ and $N=N_V(Q)$. By the Frattini argument $G=VT$. Write $Q_i=Q\cap S_i$ and $N_i=N_{S_i}(Q_i)$ for $i=1,\dots,m$. Then  $N=N_1\cdots N_m$.  Suppose that $T$ is nonsoluble. Since  $G/V$ is soluble  by the above, then $N$ is nonsoluble.

Let $R_i=R(N_i)$ for $i=1,\dots,m$ and $R=R(N)=\prod_i R_i$ be the corresponding soluble radicals. Choose a minimal normal subgroup $B/R$ of $T/R$ contained in $N/R$. Since $R(N/R)=1$, it follows that $B/R$ is a direct product of nonabelian simple groups. Since $B/R\leq N/R\cong N_1/R_1\times\dots\times N_m/R_m$, it follows that there are subgroups  $B_i\leq N_i$ such that $B=B_1\times\dots \times B_m$ and  $B_iR/R$ is a direct product of nonabelian simple groups.  Note that $T$ transitively permutes the factors $B_iR/R$, since $T$ acts transitively on the set of $S_i$. Let now $x\in T$ normalize $B_iR$. Since $R$ is soluble, it follows that $B_i^x\leq N_i$ and therefore $x$ normalizes $S_i$.

 We now apply the induction argument to $T/R$  and its minimal normal subgroup $B/R$, which we represent as a direct product $B/R=U_1\times\dots\times U_n$, where the $U_i$ are nonabelian simple groups, each of which must be contained in one of the $B_iR/R$. It follows that  the subgroup  $C=\bigcap N_T(U_i)$ is contained in $\bigcap N_T(B_iR)$, which is in turn contained in $K\cap T$ by the above argument.
Since $T$ is a proper subgroup of $G$,  we deduce that $L_{2\times 2'}(T/C)\leq h-1$. Then also $L_{2\times 2'}(T/(K\cap T)\leq h-1$ and $L_{2\times 2'}(G/K)\leq h-1$, a contradiction with the assumption.

Thus, we can assume that $T$ is soluble. Since $G/K$ is a homomorphic image of $T$, it follows that $L_{2\times 2'}(T)=h$,  so that $D_h(T)$ is a direct product of a $2$-group by a $2'$-group. If $q=2$, then  let $P$ be the Hall $2'$-subgroup of  $D_h(T)$, and if $q\ne 2$, then let $P$ be the Sylow $2$-subgroup of $D_h(T)$. In either case, $P$ is normal in $T=N_G(Q)$ and therefore $P$ and $Q$ commute. In particular, $P$ centralizes $Q_i$ for every $i=1,\dots,m$. But then $P$ must normalize all the factors $S_1,S_2,\ldots,S_m$ and so $P\leq K$. If $q=2$, then this contradicts the assumption that  $D_h(G/K)\leq  D_h(T)K/K$  is not a  $2$-group. If $q\ne 2$, then this contradicts the assumption that  $D_h(G/K)\leq D_h(T)K/K$  is a  $2$-group.
\end{proof}

\begin{proof}[Proof of Theorem~\ref{t-2l}]  Recall that  $L_2$ is the maximum $2$-length of soluble subgroups of a finite group $G$. We need to prove that the nonsoluble length  $\lambda (G)$  does not exceed $2L_2+1$. Since $L_{2\times 2'}(G)\leq 2L_2+1$, it is sufficient to prove he following.

 \begin{theorem}\label{t-2ll}
 The nonsoluble length $\lambda (G)$   does not exceed $L_{2\times 2'}(G)$.
 \end{theorem}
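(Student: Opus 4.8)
The plan is to induct on $h = L_{2\times 2'}(G)$, peeling off one kernel subgroup at a time and using Lemma~\ref{sisis} to guarantee that the $(2\times 2')$-length drops by at least one at each step. If $G$ is soluble, then $\lambda(G)=0\le h$ and there is nothing to prove; in particular the base case $h=0$ forces $G=1$. So I would assume $G$ nonsoluble and that the statement already holds for every group of smaller $(2\times 2')$-length.

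The first step is to manoeuvre into a position where Lemma~\ref{sisis} applies, since that lemma is phrased in terms of minimal normal subgroups rather than the socle of the quotient by the soluble radical. I would pass to $\bar G = G/R(G)$, where $R(\bar G)=1$, so that $Soc(\bar G)=S_1\times\cdots\times S_m$ is a genuine product of nonabelian minimal normal subgroups of $\bar G$; it is non-empty precisely because $G$ is nonsoluble. By the definition of the kernel subgroup, the image $\bar K$ of $K=K(G)$ in $\bar G$ is exactly $\bigcap_i N_{\bar G}(S_i)$, so Lemma~\ref{sisis} applies to $\bar G$ and gives $L_{2\times 2'}(\bar G/\bar K)<L_{2\times 2'}(\bar G)$. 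Since $\bar G/\bar K\cong G/K$ and, by Lemma~\ref{facto}, $L_{2\times 2'}(\bar G)=L_{2\times 2'}(G/R(G))\le L_{2\times 2'}(G)=h$, this yields the crucial inequality $L_{2\times 2'}(G/K)\le h-1$.

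The induction then closes immediately. Applying the inductive hypothesis to $G/K$ gives $\lambda(G/K)\le L_{2\times 2'}(G/K)\le h-1$. On the other hand $K=K(G)$ is normal in $G$ and, by Lemma~\ref{l-kern}, has nonsoluble length at most $1$. Invoking the subadditivity of nonsoluble length under extensions recorded in the Preliminaries, I conclude $\lambda(G)\le \lambda(K)+\lambda(G/K)\le 1+(h-1)=h$, which is the desired bound.

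I expect the genuinely hard content to be wholly absorbed into Lemma~\ref{sisis}, namely the fact that passing to the kernel of the permutation action on the simple socle factors strictly decreases the $(2\times 2')$-length. Granting that lemma, the only delicate points in the theorem proper are bookkeeping: reconciling the ``minimal normal subgroup'' hypothesis of Lemma~\ref{sisis} with the ``$Soc(G/R(G))$'' definition of $K(G)$, which I handle by descending to $\bar G=G/R(G)$, and checking that $Soc(\bar G)$ is non-empty, which is exactly where the standing assumption that $G$ is nonsoluble is consumed.
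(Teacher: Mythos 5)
Your proposal is correct and follows essentially the same route as the paper's proof: induction on $L_{2\times 2'}(G)$, reduction modulo the soluble radical so that Lemma~\ref{sisis} applies to the socle and the kernel subgroup $K(G)$, and then the combination of Lemma~\ref{l-kern} with subadditivity of nonsoluble length under extensions. The extra care you take in reconciling the hypotheses of Lemma~\ref{sisis} with the definition of $K(G)$ is exactly what the paper compresses into ``we can clearly assume that $G$ has trivial soluble radical.''
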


 \begin{proof}
 We proceed by induction on $L_{2\times 2'}(G)$. Using Lemma~\ref{facto} we can clearly assume that $G$ has trivial soluble radical. Let
 $K=K(G)$ be the kernel subgroup.
 By Lemma~\ref{sisis} we have  $L_{2\times 2'}(G/K)< L_{2\times 2'}(G)$. By the induction hypothesis, $G/K$ has nonsoluble length at most $L_{2\times 2'}(G/K)$. Since $K$ has nonsoluble length 1 by Lemma~\ref{l-kern}, the result follows.
\end{proof}
The proof of Theorem~\ref{t-2l} is now complete.
 \end{proof}

\begin{proof}[Proof of Corollary~\ref{c-fh}]  Let $h$ be the maximum Fitting height of soluble subgroups of a finite group $G$. We need to show that the non-soluble length $\lambda (G)$  is at most $h$. The result follows immediately from Theorem~\ref{t-2ll}, since obviously $L_{2\times 2'}(G)\leq h$.
\end{proof}

\begin{proof}[Proof of Theorem~\ref{t-2l}(b)]
Bounding the non-$p$-soluble length of a finite group in terms of the maximum $p$-length of its $p$-soluble subgroups can be achieved by similar arguments as for $2$-length. We give a  different proof here in order to achieve a better bound.

Recall that $p$ is an odd prime, and let $h$ be the maximum $p$-length of $p$-soluble subgroups of a finite group $G$. We need to prove that the non-$p$-soluble length $\lambda_p (G)$  is at most $h$.

Let $K=K_p(G)$ be the $p$-kernel subgroup of $G$.
It is sufficient to prove that the maximum $p$-length of $p$-soluble subgroups for $G/K$ is strictly smaller than for $G$. This will enable easy induction, since $\lambda _p(K)\leq 1$.
 Thus, the result will follow from the following proposition.

\begin{proposition}
Let $p$ be an odd prime.
 Let
$K=K_p(G)$ be the $p$-kernel subgroup of a finite group $G$.
Then the maximum $p$-length of $p$-soluble subgroups for $G/K$ is strictly smaller than for $G$.
\end{proposition}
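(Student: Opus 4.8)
The plan is to deduce the Proposition from the following $p$-length analogue of Lemma~\ref{sisis}, and to prove that analogue by mimicking the proof of Lemma~\ref{sisis}. Write $L_p(X)$ for the maximum $p$-length of $p$-soluble subgroups of a finite group $X$. First I would record the $p$-length analogue of Lemma~\ref{facto}, namely $L_p(G/N)\le L_p(G)$ for $N\trianglelefteq G$: given a $p$-soluble subgroup $H/N$, pick $T_0\in\mathrm{Syl}_p(N)$ and set $H_0=N_H(T_0)$, so that $H=NH_0$ by the Frattini argument; since $T_0$ is a normal Sylow $p$-subgroup of $N_N(T_0)=H_0\cap N$ with $p'$-quotient, $H_0\cap N$ is $p$-soluble, hence so is $H_0$, and $l_p(H/N)\le l_p(H_0)\le L_p(G)$. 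Using this I can pass to $\bar G=G/R_p(G)$: since $K=K_p(G)$ is the full preimage of $K_p(\bar G)$, we have $G/K\cong \bar G/K_p(\bar G)$, so it suffices to prove that if $V=S_1\times\cdots\times S_m$ is a nonempty product of nonabelian minimal normal subgroups of order divisible by $p$ and $K=\bigcap_iN_G(S_i)$, then $L_p(G/K)<L_p(G)$; granting this for $\bar G$ and its socle gives $L_p(G/K)=L_p(\bar G/K_p(\bar G))<L_p(\bar G)\le L_p(G)$. As in Lemma~\ref{l-kern}, $K/V$ is soluble by Schreier's Conjecture.

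I would then argue by induction on $|G|$, taking a minimal counterexample to the displayed analogue with $L_p(G/K)=L_p(G)=:h$. The splitting step is verbatim from Lemma~\ref{sisis}: if $V=V_1\times V_2$ nontrivially, then $G/K$ embeds in $G/K_1\times G/K_2$ and induction applies; hence $V$ is a single minimal normal subgroup, $G$ permutes $\{S_1,\dots,S_m\}$ transitively, $C_G(V)=1$, and (by the same minimality argument as in Lemma~\ref{sisis}) $G/K$ is $p$-soluble, so that $G/V$ is $p$-soluble as well. Now I choose a Sylow $p$-subgroup $Q=Q_1\times\cdots\times Q_m$ of $V$, with $Q_i\in\mathrm{Syl}_p(S_i)\ne1$, set $T=N_G(Q)$ and $N=N_V(Q)$, so that $G=VT$ by the Frattini argument, $T$ is proper, and $G/K\cong T/(T\cap K)$. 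The decisive elementary observation is that any normal $p'$-subgroup $P$ of $T$ satisfies $[P,Q]=1$ by coprimality, whence $P$ centralizes each $Q_i\ne1$ and therefore normalizes each $S_i$; in particular $O_{p'}(T)\le K$, which is what lets me later factor $O_{p'}(T)$ into the kernel.

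Next I would split on whether $T$ is $p$-soluble, paralleling the two cases of Lemma~\ref{sisis}. If $T$ is not $p$-soluble, then since $T/N\cong G/V$ is $p$-soluble, $N$ is not $p$-soluble; passing to $R=R_p(N)=\prod_iR_p(N_i)$ and choosing a minimal normal subgroup $B/R$ of $T/R$ inside $N/R$, the same bookkeeping as in Lemma~\ref{sisis} gives $B=B_1\times\cdots\times B_m$ with $T$ permuting the factors $B_iR/R$ and, since $R$ is $p$-soluble, every $x\in T$ normalizing $B_iR$ already normalizes $S_i$. Thus for $B/R=U_1\times\cdots\times U_n$ a product of nonabelian simple groups, $C=\bigcap_jN_T(U_j)\le K\cap T$, and as $T$ is proper the inductive hypothesis yields $L_p(T/C)\le h-1$, forcing $L_p(G/K)\le h-1$, a contradiction. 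If instead $T$ is $p$-soluble, then $T$ is a $p$-soluble subgroup of $G$, so $l_p(T)\le h$, while $G/K\cong T/(T\cap K)$ has $p$-length $h$; hence $l_p(T)=h$. Using $O_{p'}(T)\le T\cap K$ I pass to $\tilde T=T/O_{p'}(T)$, where $O_{p'}(\tilde T)=1$, the image $\tilde Q$ of $Q$ is a nontrivial normal $p$-subgroup lying in $O_p(\tilde T)$, and $C_{\tilde T}(O_p(\tilde T))\le O_p(\tilde T)$; the aim is to show that the first $p$-layer is absorbed into the image of $T\cap K$, so that $l_p(G/K)\le l_p(\tilde T)-1=h-1$, again a contradiction.

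The main obstacle is precisely this last drop in the $p$-soluble case. The difficulty is that a normal $p$-subgroup of $T$ may permute the factors $S_i$ nontrivially, so one cannot simply assert that the whole bottom $p$-layer $O_p(\tilde T)$ lies in $K$; only the coprimality argument, which applies to $p'$-subgroups, pushes a normal subgroup into $K$ for free. Reconciling the need to eliminate a genuine $p$-layer (to lower the $p$-length) with the fact that the cost-free containment in $K$ is available only for $p'$-subgroups is the heart of the matter, and is exactly where the asymmetry of $p$-length—as opposed to the symmetric $(2\times2')$-length of Lemma~\ref{sisis}, whose bottom core splits as a direct product—forces a genuinely different argument. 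I expect the transitivity of the action on $\{S_i\}$ together with the self-centralizing property of $O_p(\tilde T)$ modulo $O_{p'}(T)$ to be what closes this gap.
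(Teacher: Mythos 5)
Your reductions (the $p$-soluble Frattini analogue of Lemma~\ref{facto}, passing to $G/R_p(G)$, the splitting of $V$, transitivity, and the non-$p$-soluble branch for $T$) are sound and broadly parallel to the paper, but the proof has a genuine gap exactly where you flag one: in the $p$-soluble case you never show that a full $O_{p',p}$-layer of your local subgroup is absorbed into $K$, and with your choice $T=N_G(Q)$ for $Q$ a Sylow $p$-subgroup of $V$ this cannot be pushed through. The coprimality argument only places $O_{p'}(T)$ inside $K$; for the $p$-part the natural test fails: if $x\in O_{p',p}(T)$ moves $S_i$ to $S_j\ne S_i$, then for $u\in Q_i$ one gets $[x,u]=(u^x)^{-1}u\in O_{p',p}(T)$ with $S_i$-projection $u$, so $[x,Q_i]$ projects onto the $p$-group $Q_i$ --- which is perfectly compatible with $O_{p',p}(T)$ having a normal $p$-complement, so no contradiction arises. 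Self-centralization of $O_p(\tilde T)$ does not rule out such an $x$ either. This is precisely the asymmetry you identified, and "expecting" transitivity plus self-centralization to close it is not a proof.

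The missing idea in the paper is Thompson's normal $p$-complement theorem (this is where $p\ne 2$ is really used): one chooses a characteristic subgroup $C_1$ of a Sylow $p$-subgroup $P_1$ of $S_1$ such that $N_{S_1}(C_1)$ has \emph{no} normal $p$-complement, transports it to $C_i\le S_i$ by conjugation, and works with $N=N_G(C)$ for $C=\prod C_i$ instead of $N_G(Q)$; one still has $VN=G$ since $N_G(C)\ge N_G(P)$. Now the same commutator computation becomes decisive: if $x\in O_{p',p}(N)$ moved $S_i$, then $[x,N_{S_i}(C_i)]\le O_{p',p}(N)$ would have $S_i$-projection containing $N_{S_i}(C_i)$, contradicting the fact that $O_{p',p}(N)$ (hence every subgroup and quotient of it) has a normal $p$-complement while $N_{S_i}(C_i)$ does not. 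Thus $O_{p',p}(N)\le K$, and since $N/(N\cap K)\cong G/K$ has $p$-length $k$, the $p$-soluble group $N$ has $p$-length at least $k+1$. (The paper handles the case where $N_G(C)$ is not $p$-soluble by induction on $|G|$ applied to $N$ and its $p$-kernel, much as in your non-$p$-soluble branch.) Without replacing $Q$ by such a $C$, your argument does not close.
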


\begin{proof}
We use induction on $|G|$.  The argument proving the basis of this induction is incorporated in the step of induction.
We can of course assume that $R_p(G)=1$.

Let $V =S_1\times\dots\times S_m$ be the socle of $G$ equal to a direct product of nonabelian simple groups $S_i$ of order divisible by $p$.
Let $M$ be a subgroup of $G$ containing $K$ such that $M/K$ is a $p$-soluble subgroup of $G/K$ with maximum possible $p$-length $k$. Our task is to find a $p$-soluble subgroup of $G$ with $p$-length higher than $k$.
 By induction we may assume that $G=M$, so that $G/K$ is $p$-soluble of $p$-length $k$. Let $T$ be a Sylow $2$-subgroup of $K$ and let $H=N_G(T)$. Then $H$ is obviously a $p$-soluble subgroup of $G$ such that $KH=G$ by the Frattini argument.

Let, without loss of generality, $\{S_1,\dots ,S_r\}$ be an orbit in the permutational action of $G$ on $\{S_1, \dots , S_m\}$ such that the image of $G$ in the action on this orbit also has $p$-length $k$; this image coincides with the image of $H$. In the group $G_1=(S_1\times\dots\times S_r)H$, the quotient
$G_1/K_p(G_1)$ is $p$-soluble of $p$-length $k$ by assumption. By induction we can assume that $G=G_1$, so that $V$ is a minimal normal subgroup in $G$, and acting by conjugation $G$ transitively permutes  $S_1,  S_2, \dots , S_m$.

Since $p\ne 2$ by hypothesis and $p$ divides $|S_1|$, we can use Thompson's theorem on normal $p$-complements \cite[Ch.~8]{go} to choose a characteristic subgroup $C_1$ of a Sylow $p$-subgroup $P_1$ of $S_1$ such that $N_{S_1}(C_1)$ does not have a normal  $p$-complement.
Choose elements $a_i\in G$ such that $S_i=S_1^{a_i}$ for $i=1,\dots , m$. Let $P_i=P_1^{a_i}$ and $C_i=C_1^{a_i}$.
Let $P=\prod P_i$
and $C=\prod C_i$. Then $P$ is a Sylow $p$-subgroup of $V$. Of course, every subgroup  $N_{S_i}(C_i)=N_{S_1}(C_1)^{a_i}$ also  does not have a normal  $p$-complement.

We claim that $N_G(C)\geq N_G(P)$. Indeed, if $x\in N_G(P)$ fixes $S_i$, then it fixes $P_i$, and therefore fixes $C_i$, which is characteristic in $P_i$. Now let $S_i^x=S_j$. Then we must have $P_i^{x}=P_j$, since $x\in N_G(P)$. Then $P_1^{a_ix}=P_1^{a_j}$, that is, $P_1^{a_ixa_j^{-1}}=P_1$, whence $C_1^{a_ixa_j^{-1}}=C_1$, since this is a characteristic subgroup of $P_1$. As a result, $C_i^x=C_1^{a_ix}=C_1^{a_j}=C_j$, which completes the proof of the claim.

By the Frattini argument we now have $VN_G(C)\geq V N_G(P) =G$, so that $VN_G(C)=G$.

Suppose that $N=N_G(C)$ is not $p$-soluble; then we can use the induction hypothesis as follows. Any non-$p$-soluble sections of $G$ are inside $V$,
and $N_V(C)=\prod N_{S_i}(C_i)$, so   the $N_{S_i}(C_i)$ are not $p$-soluble.  In the quotient $\bar N=N/R_p(N)$ by the $p$-soluble radical, the simple factors of the socle  $Soc(\bar N)$ are the images of some subgroups of the $N_{S_i}(C_i)$ (possibly, several subgroups in the same $N_{S_i}(C_i)$). The subgroups $N_{S_i}(C_i)$ are permuted by $N$, since $N$ permutes the $S_i$ and therefore must permute the $C_i$ being $N=N_G(C_1\times\dots\times C_m)$. Moreover, these permutational actions of $N$ are transitive. Hence the kernel $K_p(N)$ of the permutational action of $N$ on the set of simple factors of $Soc(\bar N)$ is contained in  $K\cap N$. Since $G/K=NK/K\cong N/(K\cap N)$, it follows that the maximum $p$-length of $p$-soluble subgroups for $ N/K_p(N)$ is at least the same as for $G/K$. Obviously, $N$ is a proper subgroup of $G$, so we can use induction for $N$, which also finishes the proof for~$G$.

Thus, we can assume that $N=N_G(C)$ is $p$-soluble. (This case also accounts for the basis of induction on $|G|$.) We claim that $O_{p',p}(N)\leq K$. Indeed, if $x\in O_{p',p}(N)$ but $x\not\in K$, then there is $i\in \{1,\dots, m\}$ such that $S_i^x\ne S_i$. Then the $S_i$-projection of $[x, N_{S_i}(C_i)]$ contains $N_{S_i}(C_i)$, which does not have a normal $p$-complement. But $[x,N_{S_i}(C_i)]\leq O_{p',p}(N)$, since $x\in O_{p',p}(N)$ and $N_{S_i}(C_i)\leq N$, a  contradiction.

Since $N/(N\cap K)\cong KN/K= VN/K=G/K$ and $O_{p',p}(N)\leq K\cap N$ as shown above, $N$ is a $p$-soluble subgroup of $p$-length greater than $k$.
\end{proof}
\renewcommand{\qedsymbol}{}
\end{proof}

\section{Groups with Sylow $p$-subgroups\\ in  products of soluble varieties and varieties of finite exponent}

We shall use the following fact several times.

\begin{lemma}\label{l-dlinn}
Let $G$ be a finite group and $P$ a Sylow $p$-subgroup of $G$.
Let $S_1\times\dots\times S_r$ be a normal subgroup of $G$ equal to a direct product of nonabelian simple groups $S_i$ the order of each of which is divisible by $p$. Let $N$ be  a normal subgroup of $P$ that has exponent $p^e$ and suppose that an element $g\in N$ has an orbit of length $p^e$ on the set $\{S_1,S_2,\ldots,S_r\}$ in the permutational action of $G$ induced by conjugation. Then this orbit is invariant under $N$.
\end{lemma}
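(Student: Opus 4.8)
The plan is to argue by contradiction: assuming the $g$-orbit $\Omega$ of length $p^e$ is \emph{not} invariant under $N$, I will manufacture an element of $N$ of order strictly greater than $p^e$, contradicting the hypothesis that $N$ has exponent $p^e$. Write $V=S_1\times\dots\times S_r$ and set $Q_i=P\cap S_i$; since each $|S_i|$ is divisible by $p$ and $P$ is a Sylow $p$-subgroup, every $Q_i\neq 1$. First I record the one structural input beyond the bare permutation action: because $V\trianglelefteq G$ and $N\trianglelefteq P$, for every $u\in P\cap V$ and $n\in N$ the commutator $[u,n]$ lies in $N\cap V$ (it lies in $N$ since $[u,n]=(n^{-1})^{u}\,n$ with $(n^{-1})^{u}\in N$ by normality of $N$ in $P$, and in $V$ since $u^{-1}\in S_a\subseteq V$ and $u^{n}\in V$). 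This is exactly where normality of $N$ in $P$ is essential; without it the statement is false, as the regular action of $C_p\times C_p$ on $p^2$ points shows.

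Now suppose $\Omega$ is not $N$-invariant. Choose $n\in N$ and $S_a\in\Omega$ with $S_b:=S_a^{\,n}\notin\Omega$, and pick $1\neq u\in Q_a$. Put $c:=[u,n]=u^{-1}u^{\,n}$. By the previous paragraph $c\in N\cap V$, and $c$ is supported on exactly the two factors $S_a,S_b$, with $S_a$-component $u^{-1}\neq 1$ (here $u\in S_a$ while $u^{\,n}\in S_a^{\,n}=S_b$). The element to test is $gc$, which lies in $N$ because both $g$ and $c$ do. Using $g^{p^e}=1$ (as $\exp N=p^e$) and the standard identity $(gc)^{m}=g^{m}\prod_{j=m-1}^{0}c^{\,g^{j}}$, I expand $(gc)^{p^e}=\prod_{j=p^e-1}^{0}c^{\,g^{j}}$, a product of $\langle g\rangle$-conjugates of $c$, each $c^{\,g^{j}}$ being supported on the two factors $S_a^{\,g^{j}}$ and $S_b^{\,g^{j}}$.

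The heart of the matter is the projection of this product onto the single factor $S_a$. Since the $\langle g\rangle$-orbit of $S_a$ is $\Omega$ of full length $p^e=|\langle g\rangle|$, the action of $\langle g\rangle$ on $\Omega$ is regular, so $S_a^{\,g^{j}}=S_a$ only for $j=0$; and since $S_b$ lies in a \emph{different} $g$-orbit, $S_b^{\,g^{j}}\neq S_a$ for all $j$. Hence among the factors $c^{\,g^{j}}$ only the term $j=0$ has any support on $S_a$, so the $S_a$-projection of $(gc)^{p^e}$ equals that of $c$, namely $u^{-1}\neq 1$. Therefore $(gc)^{p^e}\neq 1$, so $gc\in N$ has order greater than $p^e$, contradicting $\exp N=p^e$; thus $\Omega$ is $N$-invariant.

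The one delicate point — and the reason I deliberately place the second ``leg'' $S_b$ of $c$ \emph{outside} $\Omega$ — is to prevent cancellation in this projection. If both support factors of $c$ lay inside $\Omega$, the conjugates $c^{\,g^{j}}$ could fold back onto $S_a$ and the product vanish; indeed a nonzero element of $N\cap V$ supported on a single factor of $\Omega$ need not exist, and the natural two-coordinate substitutes supported inside $\Omega$ have trivial norm along $g$. Routing one leg through a different $g$-orbit guarantees its conjugates never return to $S_a$, isolating the single surviving contribution $u^{-1}$. The remaining checks are routine: that $Q_a\neq 1$, that $[u,n]\in N\cap V$, and the conjugation-expansion of $(gc)^{p^e}$.
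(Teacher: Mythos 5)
Your proof is correct and follows essentially the same route as the paper's: both arguments take a nontrivial $p$-element $u$ in a factor of the orbit, form the commutator of $u$ with the offending element of $N$ (which lies in $N$ by normality of $N$ in $P$ and is supported on one factor inside the orbit and one outside), multiply by $g$, and expand the $p^e$-th power as a product of $\langle g\rangle$-conjugates to exhibit a surviving nontrivial component. The only cosmetic difference is that you isolate the contradiction by projecting onto the single factor $S_a$, whereas the paper splits the product into the two commuting brackets supported inside and outside the orbit.
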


\begin{proof}
Note that $P_i=P\cap S_i$ is a Sylow $p$-subgroup of $S_i$ for every $i$, and $P_i\ne 1$ by hypothesis.
 We assume without loss of generality that an orbit of $g\in N $ of length $p^e$  is $\{S_1,S_2,\ldots,S_{p^e}\}$. Let $a$ be any element of $N$. We claim that then  the set  $\{S_1,S_2,\ldots,S_{p^e}\}$ is invariant under $a$.  Suppose the opposite,  without loss of generality, $S_1^a\not\in \{S_1,S_2,\ldots,S_{p^e}\}$. Choose a nontrivial element $x\in P\cap S_1$ and consider the element
 $g[a,x]=g(x^{-1})^ax$, which also belongs to $N$ and therefore $(g(x^{-1})^ax)^{p^e}=1$.  Since  $(x^{-1})^a$ belongs to one of the $S_j$ outside the $g$-orbit  $\{S_1,S_2,\ldots,S_{p^e}\}$, its images under powers of $g$ stay outside this orbit and commute with the images of $x$ under powers of $g$. Therefore in the expansion
\begin{align*}
1&=(g(x^{-1})^ax)^{p^e}\\
  &=g^{p^e}((x^{-1})^ax)^{g^{p^e-1}} ((x^{-1})^ax)^{g^{p^e-2}} \cdots (x^{-1})^ax\\
  &=\big[ ((x^{-1})^a)^{g^{p^e-1}} ((x^{-1})^a)^{g^{p^e-2}} \cdots (x^{-1})^a\big]
\big[ x^{g^{p^e-1}} x^{g^{p^e-2}} \cdots x\big]
\end{align*}
both square brackets on the right must be trivial. This is a contradiction, since in the second bracket all elements $x^{g^i}$ are nontrivial in different subgroups $S_1,\dots ,S_{p_e}$.
Thus, the set  $\{S_1,S_2,\ldots,S_{p^e}\}$ is invariant under~$N$.
\end{proof}

In the proof of Theorem~\ref{t-exp-sol}  the following lemma will help to reduce exponents of normal subgroups in a Sylow $p$-subgroup.

\begin{lemma}\label{l-exp}
Let $P$ be a Sylow $p$-subgroup of a finite group $G$, and $N$ a normal subgroup of $P$ of exponent $p^e\neq 1$.
Then the $p^{e-1}$-th power $(N^{(e)})^{p^{e-1}}$ of the  $e$-th derived subgroup $N^{(e)}$ of $N$ is contained in the $p$-kernel subgroup $K_p(G)$. In particular, if $e=1$, then $N^{(1)}\leq K_p(G)$.
\end{lemma}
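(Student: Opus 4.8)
The plan is to work in the quotient $\bar G=G/R_p(G)$, where the relevant socle $S_1\times\cdots\times S_m$ lives, and to show that $(N^{(e)})^{p^{e-1}}$ fixes each $S_i$ under the conjugation action on $\{S_1,\ldots,S_m\}$. Since $K_p(G)$ is precisely the full preimage of $\bigcap_i N_{\bar G}(S_i)$ and contains $R_p(G)$, this is equivalent to the assertion. Let $\bar P$ and $\bar N$ be the images of $P$ and $N$ in $\bar G$; then $\bar P$ is a Sylow $p$-subgroup of $\bar G$, the subgroup $\bar N$ is normal in $\bar P$, and its exponent is $p^f$ for some $f\le e$ (it may drop in the quotient). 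Writing $\rho\colon \bar N\to \mathrm{Sym}(\{S_1,\ldots,S_m\})$ for the permutation representation, it suffices to prove $(\rho(\bar N)^{(e)})^{p^{e-1}}=1$, since $\rho$ commutes with forming derived subgroups and powers.

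First I would reduce to a single orbit. The image $\rho(\bar N)$ embeds subdirectly in $\prod_{\mathcal O}\bar N_{\mathcal O}$, the product being taken over the $\bar N$-orbits $\mathcal O$ on $\{S_1,\ldots,S_m\}$, where $\bar N_{\mathcal O}$ is the transitive permutation $p$-group induced on $\mathcal O$. Hence it is enough to show $(\bar N_{\mathcal O}^{(e)})^{p^{e-1}}=1$ for every orbit $\mathcal O$, say of length $p^t$. Two standard facts drive the argument. On the one hand, $\bar N_{\mathcal O}$ is a $p$-subgroup of $\mathrm{Sym}(p^t)$, hence embeds in a Sylow $p$-subgroup of $\mathrm{Sym}(p^t)$, namely the iterated wreath product of $t$ copies of $C_p$, whose derived length is $t$; thus $\bar N_{\mathcal O}$ has derived length at most $t$. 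On the other hand, Lemma~\ref{l-dlinn}, applied in $\bar G$ with $e$ replaced by $f$, tells us that any cycle of length $p^f$ of an element of $\bar N$ is $\bar N$-invariant; inside the transitive orbit $\mathcal O$ the only nonempty $\bar N$-invariant subset is $\mathcal O$ itself, so such a cycle can occur only when $p^f=|\mathcal O|=p^t$.

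The proof then splits according to the size of the orbit. If $t\le e$, the derived-length bound gives $\bar N_{\mathcal O}^{(e)}\le \bar N_{\mathcal O}^{(t)}=1$, so the $e$-th derived subgroup already vanishes. If $t>e$, then in particular $t>f$, so by the previous paragraph no element of $\bar N$ has a cycle of length $p^f$ inside $\mathcal O$; since every element of $\bar N_{\mathcal O}$ has order dividing $p^f$, all its cycles then have length at most $p^{f-1}$, whence $\bar N_{\mathcal O}$ has exponent dividing $p^{f-1}\le p^{e-1}$ and $(\bar N_{\mathcal O})^{p^{e-1}}=1$. In either case $(\bar N_{\mathcal O}^{(e)})^{p^{e-1}}=1$, which is what was needed; the special case $e=1$ is the one where every orbit has length $1$ or $p$ and the induced groups are cyclic, giving $N^{(1)}\le K_p(G)$ directly. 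The point I expect to require the most care is the extraction of the exponent reduction from Lemma~\ref{l-dlinn}: one must observe that an element realizing a cycle whose length equals its own exponent forces that cycle to exhaust a whole transitive orbit, so that long orbits are incompatible with high-order elements. The remaining things to get right are the bookkeeping of the possibly smaller exponent $p^f\le p^e$ after passing to $\bar G$, and the neat division of labor whereby the derived subgroup disposes of the short orbits while the $p^{e-1}$-th power disposes of the long ones.
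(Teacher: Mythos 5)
Your argument is correct, and it uses the same two ingredients as the paper's proof: Lemma~\ref{l-dlinn} (applied to the exponent of the image of $N$ modulo $R_p(G)$) and the fact that a $p$-subgroup of the symmetric group on $p^t$ symbols has derived length at most $t$. Where you differ is in the decomposition, and in which of the two operations disposes of which part. The paper splits the socle factors according to whether they lie in a cycle of length $p^e$ of some element of $N$: for those that do, Lemma~\ref{l-dlinn} makes the whole cycle $N$-invariant, so the action on it factors through a $p$-subgroup of the symmetric group on $p^e$ symbols and $N^{(e)}$ fixes each such factor; for the remaining factors every element of $N$ has cycles of length at most $p^{e-1}$, so $N^{p^{e-1}}$ fixes them. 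You instead split by full $\bar N$-orbits: the derived subgroup kills the action on orbits of length at most $p^e$ (via the derived-length bound alone), while for longer orbits you use Lemma~\ref{l-dlinn} in the contrapositive --- a cycle of maximal length $p^f$ would have to be the whole orbit --- to force the induced permutation group to have exponent at most $p^{f-1}\leq p^{e-1}$. So the roles of $N^{(e)}$ and of the $p^{e-1}$-th power are essentially interchanged between the two arguments. Both are complete and of comparable length; yours yields the slightly sharper side observation that on any orbit longer than $p^e$ the induced action of $N$ already has small exponent, while the paper's is marginally more economical in that it never needs to track the possibly reduced exponent $p^f$ of the image of $N$ in the quotient.
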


\begin{proof}
We can assume that $G$ has trivial $p$-soluble radical $R_p(G)=1$.
Let $V=
S_1\times S_2\times\cdots\times S_r $ be the socle of $G$, where the $ S_i$
are simple groups of order divisible by $p$.
            Acting on $V$ by conjugation the group $G$ acts by permutations on the set
 $\{S_1,S_2,\ldots,S_r\}$.
By Lemma~\ref{l-dlinn} the subgroup $N$ leaves invariant any orbit of length $p^e$ of some element of $N$.
 Since the Sylow $p$-subgroup of the symmetric group on $p^e$ symbols has derived length $e$, we obtain that the $e$-th derived subgroup $N^{(e)}$ normalizes those  factors of $V= S_1\times S_2\times\cdots\times S_r $ that are in an orbit of length $p^e$ of some element of $N$.  All other factors are in orbits of lengths dividing $p^{e-1}$ for all elements of $ N$, and therefore $N^{p^{e-1}}$ normalizes all such factors. Combining these facts, we obtain that $ (N^{(e)})^{p^{e-1}}\leq K_p(G)$
\end{proof}

The next lemma will help to tackle the derived length of normal subgroups  in a Sylow $p$-subgroup.

\begin{lemma}\label{l-abel} Let $G$ be a finite group and $P$ a Sylow $p$-subgroup of $G$, and let $A$ be an abelian normal subgroup of $P$.
Then $A\leq K_p(G)$ if $p\ne 2$, and $A^2\leq K(G)$ if $p=2$.
\end{lemma}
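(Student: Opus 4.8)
The plan is to reduce to the case $R_p(G)=1$ and then to pin down the permutation action of $A$ on the simple socle factors by a single commutator computation that exploits commutativity of $A$. First I would pass to $\bar G=G/R_p(G)$: the image $\bar A$ of $A$ is again an abelian normal subgroup of the Sylow $p$-subgroup $\bar P=PR_p(G)/R_p(G)$ of $\bar G$, while $K_p(G)$ and $K(G)$ are the full preimages of $K_p(\bar G)$ and $K(\bar G)$. Hence $A\le K_p(G)$ is equivalent to $\bar A\le K_p(\bar G)$, and likewise for the squares, so I may assume $R_p(G)=1$. Write $V=S_1\times\cdots\times S_r$ for the socle, a normal subgroup of $G$ with each $S_i$ nonabelian simple of order divisible by $p$. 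Conjugation makes $A$, a $p$-group, act as a $p$-group of permutations on $\Omega=\{S_1,\dots,S_r\}$, so every cycle length occurring is a power of $p$; moreover each $P_i=P\cap S_i$ is a nontrivial Sylow $p$-subgroup of $S_i$. The lemma is then equivalent to saying that this action is trivial when $p$ is odd, and has exponent at most $2$ when $p=2$ (so that $a^2$ fixes every $S_i$ for all $a\in A$, i.e.\ $A^2\le K(G)$).

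The main step is to prove that no element of $A$ can have a cycle of length $\ge 3$ on $\Omega$. Writing $y^g=g^{-1}yg$, suppose some $a\in A$ induces a permutation $\sigma$, defined by $S_i^a=S_{\sigma(i)}$, with $1$ lying in a cycle of length $\ge 3$, so that $1,\sigma(1),\sigma^2(1)$ are distinct. Fix $1\ne x\in P_1$. Since $A$ is normal in $P$ and $x\in P$, the commutator
\[
w=[a,x]=(x^{-1})^{a}\,x\in S_{\sigma(1)}\cdot S_1
\]
lies in $A\cap V$, and because $\sigma(1)\ne 1$ it has nontrivial components precisely in the two distinct factors $S_{\sigma(1)}$ and $S_1$. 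Here I would use that $(x^{-1})^a=a^{-1}x^{-1}a\in S_1^a=S_{\sigma(1)}$ and that components in distinct factors commute.

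Now I would invoke commutativity: since $a,w\in A$ we have $w^a=w$. But $w^a=(x^{-1})^{a^2}x^{a}$ has nontrivial components in $S_{\sigma^2(1)}$ and $S_{\sigma(1)}$, whereas $w$ is supported on $S_{\sigma(1)}$ and $S_1$. Equating the two sets of supporting factors and using $\sigma(1)\ne 1$ forces $S_{\sigma^2(1)}=S_1$, i.e.\ $\sigma^2(1)=1$, contradicting the length of the cycle. Consequently every cycle of every element of $A$ has length dividing $2$. If $p$ is odd, a cycle length that is both a power of $p$ and at most $2$ equals $1$, so $A$ acts trivially and $A\le K_p(G)$. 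If $p=2$, all cycle lengths lie in $\{1,2\}$, so the permutation group induced by $A$ has exponent at most $2$; hence $a^2\in K(G)$ for each $a\in A$ and $A^2\le K(G)$.

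I expect the main obstacle to be the support bookkeeping: verifying carefully that $w\in A\cap V$ and that it is genuinely supported on exactly the two named factors (so that the equality $w=w^a$ yields an honest constraint on $\sigma$), together with the translation of $w^a=w$ into the arithmetic identity $\sigma^2(1)=1$. This computation is a hands-on substitute for the mechanism behind Lemma~\ref{l-dlinn}, and because $A$ is abelian I do not need its full strength. It is also worth flagging that the boundary case $p=2$, cycle length $2$, is exactly where the argument cannot be pushed further, which is precisely what accounts for the weaker conclusion $A^2\le K(G)$ in even characteristic.
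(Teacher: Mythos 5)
Your proposal is correct and follows essentially the same route as the paper: reduce to $R_p(G)=1$, pick a nontrivial $x\in P\cap S_1$, form the commutator $[a,x]\in A$ supported on $S_1$ and $S_1^a$, and derive a contradiction from the fact that conjugation by $a$ must fix it while shifting its support onto $S_1^{a^2}$. The only cosmetic difference is that the paper works with $[x,a]=x^{-1}x^a$ (the inverse of your $w$) and phrases the contradiction via the $S_3$-projection of $[x,a]^a$ rather than via equality of supports.
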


\begin{proof} We can assume that $G$ has trivial $p$-soluble radical $R_p(G)=1$. Let
$V=
S_1\times S_2\times\cdots\times S_r $ be the socle of $G$ equal to a direct product of
 simple groups $S_i$ of order divisible by $p$. Then $P_i=P\cap S_i\ne 1 $ is a Sylow $p$-subgroup of $S_i$ for every  $i$.
  We claim that $A^2\leq K_p(G)$. Suppose this is false. Then there exist an integer $m\geq 3$ and an element $a\in A$ which has an orbit of length $m$ in the set $\{S_1,S_2,\ldots,S_r\}$. Without loss of generality we assume that this orbit is $\{S_1, \ldots,S_m\}$. Choose $1\ne x\in P_1$ and look at the element $[x,a]=x^{-1}x^a\in S_1\times S_2$ with non-trivial projections both on $ S_1$ and $S_2$. Since both $a$ and $[x,a]$ belong to the abelian subgroup $A$, we conclude that $a$ and $[x,a]$ commute. On the other hand, $[x,a]^a$ has a non-trivial projection onto $S_3$ and therefore cannot be equal to $[x,a]$, a contradiction.
\end{proof}

\begin{proof}[Proof of  Theorem~\ref{t-exp-sol}] Recall that  $G$ is a finite group,  $P$ a Sylow $p$-subgroup of $G$, and  $P$ belongs to the variety ${\frak B}_{p^{a_1}}{\frak A}^{d_1}\cdots {\frak B}_{p^{a_n}}{\frak A}^{d_n}$; we need to bound
the non-$p$-soluble length $\lambda _p(G)$ in terms of $\sum a_i+\sum d_i$. We  use double induction: first by the number of factors in this product of varieties, and for a given number of factors secondary induction on the parameter  of the first nontrivial factor. The basis of the primary induction is a single variety factor when $P$ either  is soluble of derived length $d_1$, or has exponent $p^{a_1}$. In the first case the $p$-length of $p$-soluble subgroups of $G$ is at most $d_1$ by the theorems of Hall and Higman \cite{ha-hi} for $p\ne 2$, and Bryukhanova \cite{bry81} for $p=2$ (earlier a weaker bound for $p=2$ was obtained by Berger and Gross \cite{ber-gro}). In the second case  the $p$-length of $p$-soluble subgroups is at most $2a_1+1$  for $p\ne 2$ by the theorem of Hall and Higman \cite{ha-hi}, and at most $a_1$ for $p=2$  by Bryukhanova's theorem \cite{bry79} (earlier weaker bounds for $p=2$ were obtained by Hoare \cite{hoa} and  Gross \cite{gro}). In either case, then the result follows by Theorem~\ref{t-2l}.

When we perform the secondary induction on $d_1$ or $a_1$, we aim at killing off the first variety factor thus reducing the number of (nontrivial) variety factors,  possibly at the expense of increasing some of the remaining parameters $a_i, d_j$. These increases will happen in a controlled manner, so that the resulting sum of the $a_i, d_j$ will still be bounded in terms of the original sum, and the result will follow.

We can of course assume that all factors of our  product variety ${\frak B}_{p^{a_1}}{\frak A}^{d_1}\cdots {\frak B}_{p^{a_n}}{\frak A}^{d_n}$ are nontrivial, apart from possibly the first and the last ones, since ${\frak A}^{k}{\frak A}^{l}={\frak A}^{k+l}$ and
${\frak B}_{p^{k}}{\frak B}_{p^{l}}\subseteq {\frak B}_{p^{k+l}}$.

First assume that the first nontrivial factor of the product variety is indeed ${\frak B}_{p^{a_1}}$ for $a_1\geq 1$. Then
a Sylow $p$-subgroup $P$ has a normal subgroup $N$ of exponent  $p^e$ for $e\leq {a_1}$ such that $P/N$ belongs to the shorter product variety ${\frak A}^{d_1}\cdots {\frak B}_{p^{a_n}}{\frak A}^{d_n}$. By Lemma~\ref{l-exp} we have $(N^{(e)})^{p^{e-1}}\leq K_p(G)$. This means that the image of $P$ in $G/K_p(G)$ belongs to the product variety ${\frak B}_{p^{e-1}}{\frak A}^{d_1+e}\cdots {\frak B}_{p^{a_n}}{\frak A}^{d_n}$ with at most the same number of factors  as before and with a smaller value of the first parameter. The result now follows by induction, since $K_p(G)$ has non-$p$-soluble length at most 1, and the increase in the value of $d_1$ is bounded.
(Actually, it is easy to see that $G/K_{p,a_1}(G)$ has a Sylow $p$-subgroup in the variety ${\frak A}^{d_1+a_1(a_1+1)/2}\cdots {\frak B}_{p^{a_n}}{\frak A}^{d_n}$.)

We now consider the case where the first nontrivial factor of the product variety is actually ${\frak A}^{d_1}$ for $d_1\geq 1$ (that is, when $a_1=0$). Then a Sylow $p$-subgroup $P$ has a normal subgroup $N$ of derived length $d$ for $d\leq {d_1}$ such that $P/N$ belongs to the shorter product variety ${\frak B}_{p^{a_2}}{\frak A}^{d_2}\cdots {\frak B}_{p^{a_n}}{\frak A}^{d_n}$. Consider the case $p\ne 2$, where the argument is much simpler.
By Lemma~\ref{l-abel} we have  $N^{(d-1)}\leq K_p(G)$.
This means that the image of $P$ in $G/K_p(G)$ belongs to the product variety
with at most the same number of factors  as before and with a smaller value of the first parameter. The result now follows by induction, since $K_p(G)$ has non-$p$-soluble length at most 1.
(Actually, it is easy to see that $G/K_{p,d_1}(G)$ has a Sylow $p$-subgroup in the variety ${\frak B}_{p^{a_2}}{\frak A}^{d_2}\cdots {\frak B}_{p^{a_n}}{\frak A}^{d_n}$.)

The case $p=2$ requires more complicated arguments furnished by
the following lemma.

\begin{lemma}\label{l-cl2}
Let $H$ be a finite group and $P$ a Sylow $2$-subgroup of $H$.
Suppose that $Q$ is a normal subgroup of $P$, and let $d$ be the derived length of $Q$.

 {\rm (a)} Then $(Q^{(d-1)})^2\leq K(H)$.

 {\rm (b)} If $d\geq 2$, $(Q^{(d-1)})^2=1$, and $Q^{(d-1)}\not\leq K(H)$, then the image of $Q^{(d-2)}$ in $H/K(H)$ is nilpotent of class 2 and the image of $(Q^{(d-2)})^2$ is abelian.

   {\rm (c)} If $d\geq 2$, $(Q^{(d-1)})^2=1$,  $Q^{(d-2)}$ is nilpotent of class 2, and $Q^{(d-1)}\not\leq K(H)$, then $d=2$.
\end{lemma}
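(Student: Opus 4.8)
The plan is to obtain part (a) at once from Lemma~\ref{l-abel} and to handle (b) and (c) inside a common permutation-theoretic framework attached to the socle. For (a): since $Q$ has derived length $d$, we have $(Q^{(d-1)})'=Q^{(d)}=1$, so $A:=Q^{(d-1)}$ is abelian; being characteristic in $Q\trianglelefteq P$ it is normal in $P$. Applying Lemma~\ref{l-abel} with $p=2$ to $A$ gives $(Q^{(d-1)})^2=A^2\le K(H)$, which is exactly (a).

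\emph{Setup for (b) and (c).} I would reduce to $R_2(H)=1$, write $V=S_1\times\dots\times S_r$ for the socle and $\Omega=\{S_1,\dots,S_r\}$, and use that $K(H)$ is precisely the kernel of the conjugation action of $H$ on $\Omega$; thus $\bar H=H/K(H)$ acts faithfully on $\Omega$, and images of subgroups of $P$ act as $2$-groups. Writing bars for images, the hypothesis $(Q^{(d-1)})^2=1$ makes $A=Q^{(d-1)}$ a normal subgroup of $P$ of exponent $2$. By the argument of Lemma~\ref{l-abel} (equivalently Lemma~\ref{l-dlinn} with $e=1$) every element of $A$ has all orbits on $\Omega$ of length at most $2$, and each length-$2$ orbit is an $A$-invariant pair. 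Hence $\Omega=\Omega_1\sqcup\Omega_2$, where $A$ fixes $\Omega_1$ pointwise and $\Omega_2$ is a disjoint union of $A$-invariant ``blocks'' of size $2$; moreover, since $A\trianglelefteq P$, this block partition of $\Omega_2$ is invariant under all of $P$.

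\emph{Part (b).} Put $B=Q^{(d-2)}$, so $B'=A$. The assertion that $\overline{B}$ is nilpotent of class $2$ is equivalent to $[\bar B,\bar A]=1$, i.e.\ to the statement that no commutator $[b,a]$ with $b\in B,\ a\in A$ interchanges the two factors of a block. As $B$ normalizes $A$, it permutes the blocks; and because $A=B'$ acts only inside blocks, the induced map $\pi\colon B\to\operatorname{Sym}(\text{blocks})$ kills $B'$, so its image $T=\pi(B)$ is abelian. Recording the block-permutation together with the within-block swaps realizes $\bar B$ inside $C_2\wr T$, with $\bar A=\bar B'$ landing in the base. The heart of the matter is to show that $T$ has exponent $2$ and that the ``swap vector'' of each $a\in A$ is $T$-invariant, and this is where I expect the main obstacle. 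An order-$4$ block permutation would, together with $A$, yield a normal subgroup of $P$ of exponent $4$ carrying a length-$4$ orbit that $A$ does not preserve (because $A$ swaps the two points inside one constituent block), contradicting Lemma~\ref{l-dlinn}; the technical work is to arrange the exponent and normality correctly, in the internal spirit of Lemmas~\ref{l-dlinn} and~\ref{l-abel}, using nontrivial elements of $P\cap S_i$. Once class $2$ is established, the claim on $\overline{(Q^{(d-2)})^2}$ is formal: in a class-$2$ group whose derived subgroup $\bar A$ is central of exponent $2$, any two squares commute since $[\bar b_1^{\,2},\bar b_2^{\,2}]=[\bar b_1,\bar b_2]^{4}=1$, so $\overline{B^2}$ is abelian.

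\emph{Part (c).} Here $B=Q^{(d-2)}$ is assumed genuinely nilpotent of class $2$, so $A=B'$ is central in $B$ and of exponent $2$, and $A\not\le K(H)$. Suppose $d\ge 3$. Then $B=(Q^{(d-3)})'$ lies in the derived subgroup of $C:=Q^{(d-3)}$, which supplies one further layer of commutators. Feeding the central, exponent-$2$ swap data of $A$ through this extra layer produces, again via a normal subgroup of $P$ of exponent $4$ with a length-$4$ orbit, a configuration incompatible with $A$ swapping points inside a block unless $A\le K(H)$; this contradiction forces $d=2$. Throughout (b) and (c) the engine is Lemma~\ref{l-dlinn}, which converts bounds on exponents of normal subgroups of $P$ into invariance of short orbits, together with the internal projection technique of Lemma~\ref{l-abel}.
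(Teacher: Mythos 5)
Part (a) of your proposal is correct and is exactly the paper's argument: $Q^{(d-1)}$ is abelian and normal in $P$, so Lemma~\ref{l-abel} applies. Your setup for (b) and (c) -- reduce to trivial soluble radical, identify $K(H)$ with the kernel of the action on the simple factors $\Omega=\{S_1,\dots,S_r\}$, and use Lemma~\ref{l-dlinn} to split $\Omega$ into $A$-fixed points and $P$-permuted $2$-blocks -- also coincides with the paper's. Your reduction of (b) to the centrality of $\bar A$ in $\bar B$ is a correct reformulation, and the deduction of the abelianness of $\overline{(Q^{(d-2)})^2}$ from class $2$ is fine. But the centrality itself (equivalently: that $Q^{(d-2)}$-orbits on blocks have length at most $2$ and that the swap pattern of each $a\in A$ is invariant under the block action) is the entire content of part (b), and you do not prove it -- you explicitly defer it as ``the main obstacle.'' The fallback you sketch does not work as stated: to invoke Lemma~\ref{l-dlinn} you would need a normal subgroup of $P$ of exponent exactly $4$ containing an element with an orbit of length exactly $4$, and the subgroup ``generated by $A$ together with an order-$4$ block permutation'' is in general neither normal in $P$ nor of exponent $4$ (elements of $Q^{(d-2)}$ can have large order). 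Moreover this mechanism, even if repaired, would only rule out order-$4$ block permutations and says nothing about the invariance of the swap vectors. Part (c) in your write-up is not an argument at all, only a statement of hope.

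What the paper actually does at this point is an explicit commutator computation that your outline is missing. For $a,b\in Q^{(d-2)}$ and a nontrivial $x\in P\cap S_1$, the elements $[[x,a],a]$ and $[[x,a],b]$ lie in $Q^{(d-1)}$; if the blocks $\{S_1,S_2\}$, $\{S_1^a,S_2^a\}$, $\{S_1^{a^2},S_2^{a^2}\}$ (respectively $\{S_1^b,S_2^b\}$) were pairwise disjoint, these elements of $Q^{(d-1)}$ would have nontrivial components in three distinct $2$-blocks, and the element of $Q^{(d-1)}$ that swaps $S_1$ with $S_2$ could not centralize them, contradicting the abelianness of $Q^{(d-1)}$. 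This shows $\Omega$ is a union of $Q^{(d-2)}$-invariant sets of size $2$ or $4$ together with $Q^{(d-1)}$-fixed points, so $\overline{Q^{(d-2)}}$ embeds in a direct product of Sylow $2$-subgroups of $S_2$ and $S_4$ (i.e.\ copies of $D_8$) and an image of the abelian group $Q^{(d-2)}/Q^{(d-1)}$; both conclusions of (b) then follow at once. Part (c) repeats the same computation one derived layer higher, with $a,b\in Q^{(d-3)}$ and $[[x,a],a],[[x,a],b]\in Q^{(d-2)}$, the contradiction now coming from $Q^{(d-1)}=[Q^{(d-2)},Q^{(d-2)}]$ being central in $Q^{(d-2)}$; the resulting embedding forces $\overline{Q^{(d-3)}}$ to be metabelian, hence $Q^{(d-1)}\leq K(H)$, contrary to hypothesis, so $d=2$. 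You should supply these computations (or a genuine substitute) before the proof can be considered complete.
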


\begin{proof}
Part (a) follows from Lemma~\ref{l-abel}.

We firstly state some properties common to both parts (b) and (c). We can assume that $H$ has trivial soluble radical $R(H)=1$.
Let $V=S_1\times S_2\times\cdots\times S_r $ be the socle of $H$, where the $ S_i$
are
nonabelian simple groups. Then $P_i=P\cap S_i\ne 1$ is a Sylow $2$-subgroup of $S_i$ for every $i$. Acting on $V$ by conjugation the group $H$ acts by permutations on the set
 $\{S_1,S_2,\ldots,S_r\}$.
Since  $Q^{(d-1)}$ is of exponent $2$, every element  of $Q^{(d-1)}$ has  orbits of length $1$  or $2$ on the set $\{S_1, \dots , S_r\}$, and there is at least one orbit of length $2$ by the hypothesis $Q^{(d-1)}\not\leq K(H)$. Let $\{S_1, S_2\}$ be such an orbit. By Lemma~\ref{l-dlinn}, the set $\{S_1, S_2\}$ is invariant under $Q^{(d-1)}$.  Thus the set $\{S_1, \dots , S_r\}$ is a disjoint union of the set of fixed points of $Q^{(d-1)}$ and  two-element subsets that exhaust all two-element orbits of elements of $Q^{(d-1)}$. Since $Q^{(d-1)}$ is a normal subgroup, the group $Q$ permutes the two-element orbits of elements of $Q^{(d-1)}$.

We now prove (b).
Let $a\in Q^{(d-2)}$; we claim that $a^2$ leaves the orbit $\{S_1, S_2\}$ invariant. Assume the opposite; then   $\{S_1, S_2\}$, $\{S_1^a, S_2^a\}$, and $\{S_1^{a^2}, S_2^{a^2}\}$ are disjoint subsets. Choose a nontrivial element $x\in S_1\cap P$. Then $[x,a]\in Q^{(d-2)}$ and therefore, $[[x,a],a]\in Q^{(d-1)}$. We have $[[x,a],a]=(x^{-1})^ax(x^{-1})^ax^{a^2}$ with commuting factors $x\in S_1$, $(x^{-2})^a\in S_1^a$, and $x^{a^2}\in S_1^{a^2}$ in different two-element orbits of elements of $Q^{(d-1)}$. Then obviously $Q^{(d-1)}$ cannot centralize this product, a contradiction.

Furthermore, assuming that  $\{S_1, S_2\}$ is not invariant under an element $a\in Q^{(d-2)}$, we now claim that then the  $4$-point set $\{S_1, S_2, S_1^a, S_2^a\}$
is invariant under $Q^{(d-2)}$. Indeed, let $b\in Q^{(d-2)}$. By what was proved above, if this $4$-point set is not invariant under $b$, then we have disjoint subsets $\{S_1, S_2\}$, $\{S_1^a, S_2^a\}$, and $\{S_1^{b}, S_2^{b}\}$. Then again $[x,a]\in Q^{(d-2)}$ and therefore, $[[x,a],b]\in Q^{(d-1)}$. We have $[[x,a],b]=(x^{-1})^ax(x^{-1})^bx^{ab}$ with nontrivial factors $x\in S_1$, $(x^{-1})^a\in S_1^a$, and $(x^{-1})^b\in S_1^{b}$ in three different two-element orbits of elements of $Q^{(d-1)}$. Whichever orbit $x^{ab}$ belongs to, it cannot make the product to be  centralized  by $Q^{(d-1)}$, a contradiction.

As a result, the set $\{S_1, \dots , S_r\}$ is a disjoint union of  $Q^{(d-2)}$-invariant subsets of cardinalities $4$ or $2$ and the set of fixed points for $Q^{(d-1)}$. Then  the image of $Q^{(d-2)}$ in $H/K$ embeds in a direct product of Sylow $2$-subgroups of symmetric groups on $2$ or  $4$ symbols and the image of the abelian group $Q^{(d-2)}/Q^{(d-1)}$. Hence the   image of $Q^{(d-2)}$ in $H/K$ is nilpotent of class 2 and the image of $(Q^{(d-2)})^2$ in $H/K$ is abelian.

We now prove (c).
Suppose that $d\geq 3$. Let $a\in Q^{(d-3)}$, and let $\{S_1, S_2\}$ be one of two-element orbits for some elements of $Q^{(d-1)}$. We claim that $a^2$ leaves the orbit $\{S_1, S_2\}$ invariant. Assume the opposite; then   $\{S_1, S_2\}$, $\{S_1^a, S_2^a\}$, and $\{S_1^{a^2}, S_2^{a^2}\}$ are disjoint subsets. Choose a nontrivial element $x\in S_1\cap P$. Then $[x,a]\in Q^{(d-3)}$ and therefore, $[[x,a],a]\in Q^{(d-2)}$. We have $[[x,a],a]=(x^{-1})^ax(x^{-1})^ax^{a^2}$ with commuting factors $x\in S_1$, $(x^{-2})^a\in S_1^a$, and $x^{a^2}\in S_1^{a^2}$ in different two-element orbits of elements of $Q^{(d-1)}$. Then obviously $Q^{(d-1)}$ cannot centralize this product, which contradicts the hypothesis that  $Q^{(d-1)}=[Q^{(d-2)},Q^{(d-2)}]$ is in the centre of $Q^{(d-2)}$.

Furthermore, assuming that  $\{S_1, S_2\}$ is not invariant under an element $a\in Q^{(d-3)}$, we now claim that the $4$-point set $\{S_1, S_2, S_1^a, S_2^a\}$
is invariant under $Q^{(d-3)}$. Indeed, let $b\in Q^{(d-3)}$. By what was proved above, if this $4$-point set is not invariant under $b$, then we have disjoint subsets $\{S_1, S_2\}$, $\{S_1^a, S_2^a\}$, and $\{S_1^{b}, S_2^{b}\}$. Then again $[x,a]\in Q^{(d-3)}$ and therefore, $[[x,a],b]\in Q^{(d-2)}$. We have $[[x,a],b]=(x^{-1})^ax(x^{-1})^bx^{ab}$ with nontrivial factors $x\in S_1$, $(x^{-1})^a\in S_1^a$, and $(x^{-1})^b\in S_1^{b}$ in three different two-element orbits of elements of $Q^{(d-1)}$. Whichever orbit $x^{ab}$ belongs to, it cannot make the product to be  centralized  by $Q^{(d-1)}$, a contradiction with the hypothesis that  $Q^{(d-1)}=[Q^{(d-2)},Q^{(d-2)}]$ is in the centre of $Q^{(d-2)}$.

As a result, the set $\{S_1, \dots , S_r\}$ is a disjoint union of  $Q^{(d-3)}$-invariant subsets of cardinalities $4$ or $2$ and the set of fixed points for $Q^{(d-1)}$. Therefore the image of $Q^{(d-3)}$ in $H/K$ embeds in a direct product of Sylow $2$-subgroups of symmetric groups on $2$ or  $4$ symbols and the image of the metabelian  group $Q^{(d-3)}/Q^{(d-1)}$. Hence the   image of $Q^{(d-3)}$ in $H/K$ is metabelian, a contradiction with the hypothesis that $Q^{(d-1)}\not \leq K$. Thus, $d=2$.
\end{proof}

We now finish the proof of Theorem~\ref{t-exp-sol} in the remaining case where $p=2$ and the first variety factor is
${\frak A}^{d_1}$. Recall that then a Sylow $2$-subgroup $P$ has a normal subgroup $N$ of derived length $d$ for $d\leq {d_1}$ such that $P/N$ belongs to the shorter product variety ${\frak B}_{2^{a_2}}{\frak A}^{d_2}\cdots {\frak B}_{2^{a_n}}{\frak A}^{d_n}$.
By Lemma~\ref{l-cl2}(a) we have  $(N^{(d-1)})^2\leq K(G)$. If we also have  $N^{(d-1)}\leq K(G)$, then
the image of $P$ in $G/K(G)$ belongs to the product variety
with at most the same number of factors  as before and with a smaller value of the first parameter, and the result follows by induction, since $K(G)$ has nonsoluble length at most 1. If $d=1$, then in the quotient  $G/K(G)$  a Sylow $2$-subgroup  belongs to the shorter product variety ${\frak B}_{2^{a_2+1}}{\frak A}^{d_2}\cdots {\frak B}_{2^{a_n}}{\frak A}^{d_n}$, and again the result follows by induction.

Thus we can assume that $d\geq 2$ and $N^{(d-1)}\not\leq K(G)$.
If $N^{(d-1)}\leq K_2(G)$, then the result follows by induction, since the nonsoluble length of $K_2(G)$ is at most~2. Otherwise we can apply  Lemma~\ref{l-cl2}(b) to  $H=G/K(G)$, by which   the image of $N^{(d-2)}$ in $H/K(H)=G/K_2(G)$ is nilpotent of class 2 and the image of $(N^{(d-2)})^2$ is abelian. If $N^{(d-1)}\leq K_3(G)$, then the result follows by induction, since the nonsoluble length of $K_3(G)$ is at most 3.

Otherwise we can apply  Lemma~\ref{l-cl2}(c) to  $H=G/K_2(G)$, by which $d=2$. We now apply Lemma~\ref{l-abel} to $G/K_2(G)$ and the abelian subgroup $(N^{(d-2)})^2K_2(G)/K_2(G)$, by which $((N^{(d-2)})^2)^2\leq K_3(G)$; and since $N^{(d-2)}K_2(G)=NK_2(G)$, we obtain $N^4\leq (N^2)^2\leq K_3(G)$. But then a Sylow $2$-subgroup of $G/K_3(G)$ belongs
to the shorter product variety ${\frak B}_{2^{a_2+2}}{\frak A}^{d_2}\cdots {\frak B}_{2^{a_n}}{\frak A}^{d_n}$, and the result follows by induction, since the nonsoluble length of $K_3(G)$ is at most 3 and the increase in the value of $a_2$ is bounded.

(Actually, it is easy to see that $G/K_{3d_1}(G)$ has a Sylow $2$-subgroup in the variety ${\frak B}_{2^{a_2+2}}{\frak A}^{d_2}\cdots {\frak B}_{2^{a_n}}{\frak A}^{d_n}$.)
\end{proof}

\section{Correcting an error in an earlier paper }

If $w$ is a word in variables $x_1,\dots,x_t$ we think of
it primarily as a function of $t$ variables defined on any
given group $G$. The corresponding verbal subgroup $w(G)$
is the subgroup of $G$ generated by the values of $w$. A word $w$ is a multilinear commutator
if it can be written as a multilinear Lie monomial. Particular examples
of multilinear commutators are the derived words, defined by the equations:
$$\delta_0(x)=x,$$
$$\delta_k(x_1,\dots,x_{2^k})=
[\delta_{k-1}(x_1,\dots,x_{2^{k-1}}),
\delta_{k-1}(x_{2^{k-1}+1}\dots,x_{2^k})],$$
and the lower central words:
$$\gamma_1(x)=x,$$
$$\gamma_{k+1}(x_1,\dots,x_{k+1})=
[\gamma_{k}(x_1,\dots,x_{k}),x_{k+1}].$$
The following result was obtained in \cite{896}.

{\it Let $w$ be a multilinear commutator and $n$ a positive integer. Suppose that $G$ is a residually finite group in which every product of at most 896 $w$-values has order dividing $n$. Then the verbal subgroup $w(G)$ is locally finite.}

One key step in the proof is the following proposition (\cite[Proposition 3.3]{896}).

\begin{proposition}\label{ppp} Let $k$, $m$, $n$ be positive integers
and $G$ a finite group in which every product of 896
${\delta_k}$-commutators is of order dividing $n$. Assume that $G$ can be
generated by $m$ elements $g_1,g_2,\ldots,g_m$ such that each $g_i$ and all
commutators of the forms $[g,x]$ and $[g,x,y]$, where
$g\in\{g_1,g_2,\ldots,g_m\}$, $x,y\in G$, have orders dividing $n$.
Then the order of $G$ is bounded by a function depending only on $k,m,n$.
\end{proposition}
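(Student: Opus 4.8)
The plan is to deduce the order bound from a bound on the nonsoluble length $\lambda(G)$ --- which Corollary~\ref{c-fh} now makes available --- and then to finish by means of Zelmanov's positive solution of the Restricted Burnside Problem. The number of generators $m$ will enter only at the very end: the Fitting height of soluble subgroups and the nonsoluble length will be bounded in terms of $k$ and $n$ alone, whereas converting bounded exponent into a bounded \emph{order} is exactly the step that needs bounded generation.

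First I would record that the hypotheses force every soluble subgroup of $G$ to have Fitting height bounded in terms of $k$ and $n$; this is precisely the assertion established in \cite{896}. The mechanism is that the condition on products of $896$ $\delta_k$-values yields a bound on the exponent of the $k$-th derived subgroup of each soluble subgroup, and a finite soluble group whose $k$-th derived subgroup has bounded exponent has Fitting height bounded in terms of $k$ and $n$: the quotient by the $k$-th derived subgroup contributes at most $k$, while a soluble group of bounded exponent has bounded Fitting height by the Hall--Higman bounds on $p$-length. Granting this, Corollary~\ref{c-fh} at once gives $\lambda(G)\le h$ for some $h=h(k,n)$.

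Next I would exploit the resulting normal series $1=G_0\le G_1\le\dots\le G_{2h+1}=G$ with at most $h$ nonsoluble factors, each a direct product of nonabelian simple groups. The hypotheses that the generators $g_i$ and the commutators $[g,x]$ and $[g,x,y]$ have order dividing $n$ bound, through the classification of finite simple groups, the isomorphism types of the nonabelian simple groups that can occur as composition factors, so every simple factor has order bounded in terms of $n$. The same order conditions bound the \emph{number} of factors in each layer: as in the analysis underlying Lemmas~\ref{l-dlinn}--\ref{l-abel}, a layer with too many simple factors would, via the permutation action of $G$ on them, produce a $\delta_k$-value or a commutator of order exceeding $n$. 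Finally, the soluble factors of the series have bounded exponent by the commutator hypotheses, and, with their number of generators controlled through the boundedly many factors of the series, have bounded order by Zelmanov's theorem. Since the series has boundedly many factors, each now of bounded order, $|G|$ is bounded in terms of $k$, $m$, $n$.

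I expect the main obstacle to be precisely the passage from ``soluble subgroups have bounded Fitting height'' to a bound on the global nonsoluble structure of $G$: this is the step whose original justification in \cite{896} contained the error, and it is now supplied cleanly by Corollary~\ref{c-fh}. The remaining technical difficulty is to extract \emph{genuine} exponent bounds on the soluble sections, and genuine bounds on the orders and numbers of the simple factors, from the $896$-product hypothesis in a form to which the Restricted Burnside Problem and the classification can be applied; once $\lambda(G)$ is bounded, this becomes controlled bookkeeping over the boundedly many factors of the series.
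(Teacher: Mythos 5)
Your overall strategy---bound the Fitting height of soluble subgroups of $G$ via \cite[Lemma 2.4]{896}, convert this through Corollary~\ref{c-fh} into a normal series of bounded length whose factors are nilpotent (or soluble) or direct products of nonabelian simple groups, and then bound each factor---is exactly the route the paper takes. But the two places where you fill in the final bookkeeping are genuine gaps. First, your mechanism for bounding the \emph{number} of simple factors in a semisimple layer (``a layer with too many simple factors would, via the permutation action of $G$ on them, produce a $\delta_k$-value or a commutator of order exceeding $n$'') is unsupported: Lemmas~\ref{l-dlinn}--\ref{l-abel} only control orbit lengths of elements of bounded order lying in a normal subgroup of a Sylow subgroup, and nothing in the hypotheses directly caps the number of simple factors a layer can have. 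The paper's actual mechanism is an induction from the top of the series downward: once $|G/G_s|$ is bounded, the Schreier index formula makes $G_s$ generated by a bounded number $r$ of elements; since only finitely many finite simple groups satisfy the law coming from the $896$-product hypothesis (Jones's theorem on varieties), $G_s$ is residually of order at most some $N(k,n)$, and an $r$-generated group has a bounded number of normal subgroups of index at most $N$, whence $|G_s|$ is bounded. Without the bounded generation supplied by that induction there is no control on the number of factors.

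Second, your claim that the soluble layers ``have bounded exponent by the commutator hypotheses'' is not justified: elements of a nilpotent factor $G_i/G_{i+1}$ need not be $\delta_k$-values, nor commutators of the special forms $[g,x]$ or $[g,x,y]$, so no exponent bound on the layer is immediate. This is precisely the step where the hypotheses on the generators $g_1,\dots,g_m$ and on $[g,x]$, $[g,x,y]$ must be consumed, and the paper delegates it to \cite[Lemma 3.2]{896}, which handles a nilpotent normal subgroup with quotient of bounded order; invoking Zelmanov's theorem on a layer is only legitimate after both an exponent bound and a generation bound for that layer have been established. If you replace these two steps by the top-down induction on the length of the series, citing \cite[Lemma 3.2]{896} for the nilpotent layers and the Jones/residual-order argument for the semisimple ones, your argument becomes the paper's proof.
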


However the proof of the proposition given in \cite{896} contains an error. More specifically, the error is contained in the proof of Lemma 2.5 of \cite{896} so it is not clear at all whether the lemma is correct. Therefore amendments to the proof of the proposition must be made.  Corollary \ref{c-fh} enables one to make the necessary amendments in a relatively easy way. We will require the following fact which is straightforward from Corollary \ref{c-fh}.

\begin{lemma}\label{stan}  Let $h$ be the maximum Fitting height of soluble subgroups of a finite group~$G$. Then $G$ has a normal series of length at most $h^2+2h$ each of whose factors is either nilpotent or isomorphic to a direct product of nonabelian simple groups.
\end{lemma}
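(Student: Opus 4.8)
The statement relates the Fitting height of soluble subgroups to a normal series whose factors are nilpotent or products of nonabelian simple groups. The natural strategy is to splice together two series: the nonsoluble series provided by Corollary~\ref{c-fh}, and the lower central (or Fitting) series refinement of the soluble factors. First I would invoke Corollary~\ref{c-fh} to obtain that $\lambda(G)\leq h$, so $G$ has a normal series
$$
1=G_0\leq G_1\leq \dots \leq G_{2h+1}=G
$$
with $G_{2i+1}/G_{2i}$ a direct product of nonabelian simple groups and $G_{2i}/G_{2i-1}$ soluble, for a total of at most $h$ nonsoluble factors and at most $h+1$ soluble factors.

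\textbf{Refining the soluble factors.} Each soluble factor $G_{2i}/G_{2i-1}$ is a soluble section of $G$, hence has Fitting height at most $h$. Therefore I would refine each such factor through its (lower) Fitting series, whose successive quotients are nilpotent; this replaces a single soluble factor by at most $h$ nilpotent factors. Since these nilpotent factors are characteristic in the soluble factor, the refinement is normal in the whole series. The nonsoluble factors (direct products of nonabelian simple groups) are left untouched, as they already have the required form.

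\textbf{Counting the terms.} Here is the arithmetic that gives the bound $h^2+2h$. There are at most $h+1$ soluble factors, each contributing at most $h$ nilpotent terms, for a total of at most $h(h+1)=h^2+h$ nilpotent factors; and there are at most $h$ factors that are products of nonabelian simple groups. Summing gives a series of length at most $h^2+h+h=h^2+2h$, each of whose factors is either nilpotent or a direct product of nonabelian simple groups, as required.

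\textbf{Main obstacle.} The only genuinely delicate point is confirming that every soluble factor $G_{2i}/G_{2i-1}$ really does have Fitting height bounded by $h$: this is immediate because it is a section of $G$ and $h$ is the \emph{maximum} Fitting height over \emph{all} soluble subgroups of $G$ (a section of a soluble subgroup is again soluble and has no larger Fitting height). Once this is in hand, the refinement and the counting are routine, so the proof is essentially a bookkeeping argument built on Corollary~\ref{c-fh}.
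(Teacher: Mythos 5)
Your proof is correct and is exactly the intended argument: the paper offers no separate proof, stating only that the lemma is straightforward from Corollary~\ref{c-fh}, and your count $h+h(h+1)=h^2+2h$ is the right bookkeeping. The one step you label ``immediate'' is slightly more delicate than you suggest: the soluble factors of the series are soluble \emph{sections} of $G$, not subgroups, and they need not be sections of any soluble subgroup a priori, so the hypothesis on soluble subgroups does not apply to them directly. To transfer the bound one should argue as in the proof of Lemma~\ref{facto}: if $N\trianglelefteq K$ with $K/N$ soluble, take a Sylow $2$-subgroup $T$ of $N$ and set $H=N_K(T)$; then $K=NH$ by the Frattini argument, $H$ is soluble by Feit--Thompson, and $K/N$ is a quotient of $H$, hence has Fitting height at most $h$. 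With that reference supplied, your refinement and counting go through verbatim.
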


Proposition \ref{ppp} can now be proved as follows. By \cite[Lemma 2.4]{896}, there exists a number $h$ depending only on $n$ such that the Fitting height of any soluble subgroup of $G$ is at most $h$. Thus, by Lemma \ref{stan}, $G$ possesses a normal series
$$
G=G_1\geq G_2\geq\cdots\geq G_s\geq G_{s+1}=1
$$
such that each quotient $G_{i}/G_{i+1}$ is either nilpotent or is a Cartesian product of nonabelian simple groups and $s$ is bounded in terms of $n$ only.

By induction on $s$ we can assume that the order of $|G/G_s|$ is bounded by a function depending only on $k,m,n$. If $G_s$ is nilpotent, the result is immediate from \cite[Lemma 3.2]{896}. Otherwise $G_s$ is isomorphic to a Cartesian product of nonabelian simple groups. Since the order of $G/G_s$ is bounded by a function depending only on $k,m,n$, we deduce that $G_s$ can be generated by a bounded number, say $r$, of elements.  A result of Jones \cite{jones} says that any infinite family of finite simple groups generates the variety of all groups. It follows that up to isomorphism there exist only finitely many finite simple groups in which every ${\delta_k}$-commutator is of order dividing $n$. Let $N=N(k,n)$ be the maximum of the orders of these groups. Then $G_s$ is residually of order at most $N$. Since $G_s$ is $r$-generated, the number of distinct normal subgroups of index at most $N$ in $G_s$ is
$\{r,N\}$-bounded \cite[Theorem 7.2.9]{mhall}. Therefore $G_s$ has
$\{k,m,n\}$-bounded order. We conclude that $|G|$ is $\{k,m,n\}$-bounded, as required.

The problematic Lemma 2.5 of \cite{896} was later used in \cite{austral} and \cite{dms1}. In \cite{austral} it was used in the proof of the following result (\cite[Proposition 3.4]{austral}).
\medskip

{\it Let $e$ and $k$ be positive integers. Assume that $G$ is a finite group such that $x^e=1$ whenever $x$ is a $\delta_k$-commutator in $G$. Assume further that $P^{(k)}$ has exponent dividing $e$ for every $p\in\pi(G)$ and every Sylow $p$-subgroup $P$ of $G$. Then the exponent of $G^{(k)}$ is $(e,k)$-bounded.}
\medskip

In \cite{dms1} the lemma was used to prove that the following holds (\cite[Theorem A]{dms1}).\medskip

{\it Let $w$ be either the $n$-th Engel word or the word $[x^n,y_1,y_2,\dots,y_k]$. Assume that $G$ is a finite group in which any nilpotent subgroup generated by $w$-values has exponent dividing $e$. Then the exponent of the verbal subgroup $w(G)$ is bounded in terms of $e$ and $w$ only.}
\medskip

In both of the above results the Fitting heights of soluble subgroups in $G$ are bounded in terms of the respective parameters (cf. \cite[Lemma 2.7]{austral} and \cite[Lemma 2.9]{dms1}). Thus, we can employ our Corollary \ref{c-fh} and produce a proof of each of the above results that does not use the problematic lemma.


\begin{thebibliography}{99}

\bibitem{ber-gro} T. R. Berger and F. Gross, $2$-length and the derived length of a Sylow $2$-subgroup, \emph{Proc. London Math. Soc. (3)} {\bf 34} (1977), 520--534.

\bibitem{bry79} E. G. Bryukhanova, The $2$-length and $2$-period of a finite solvable group, \emph{Algebra Logika} {\bf 18} (1979), 9--31; English transl., \emph{Algebra Logic} {\bf 18} (1979), 5--20.

 \bibitem{bry81} E. G. Bryukhanova, The relation between $2$-length and derived length of a Sylow $2$-subgroup of a finite soluble group, \emph{Mat. Zametki} {\bf 29}, no.~2 (1981), 161--170; English transl., \emph{Math. Notes} {\bf 29}, no.~1--2 (1981), 85--90.

\bibitem{dms1} E. Detomi, M. Morigi, P. Shumyatsky, { Bounding the exponent of a verbal subgroup}, \emph{Annali  Mat.}, \verb#doi:10.1007/s10231-013-0336-8#, to appear.



\bibitem{fei-tho} W. Feit and  J. G. Thompson, Solvability of groups of odd order, \emph{Pacific J. Math.} {\bf 13} (1963),  773--1029.

\bibitem{go}  Gorenstein, D.: { Finite Groups}. Chelsea Publishing Company, New York (1980).

\bibitem{gro} F. Gross, The $2$-length of a finite solvable group,
\emph{Pacific J. Math.} {\bf 15} (1965), 1221--1237.



\bibitem{mhall} M. Hall, \emph{The Theory of Groups}, Macmillan,
New York, 1959.

\bibitem{ha-hi}  P. Hall and G. Higman, The $p$-length of a $p$-soluble group and reduction
theorems for Burnside's problem, \emph{Proc. London Math. Soc. (3)} {\bf
 6} (1956), 1--42.


\bibitem{hoa} A. H. M. Hoare, A note on $2$-soluble groups, \emph{J. London Math. Soc.} {\bf 35} 1960, 193--199.




\bibitem{jones} G. A. Jones,  Varieties and simple groups, \emph{J.~Austral. Math. Soc.} {\bf  17} (1974),  163--173.

\bibitem{68}
 P. Shumyatsky, Commutators in residually finite groups, \emph{Israel J. Math.} {\bf 182} (2011), 149--156.

\bibitem{896}
 P. Shumyatsky, Multilinear commutators in residually finite groups, \emph{Israel J. Math.} {\bf 189} (2012), 207--224.

\bibitem{austral}
P. Shumyatsky,  On the exponent of a verbal subgroup in a finite group, \emph{J.~Austral. Math. Soc.}, {\bf 93} (2012), 325--332.



\bibitem{kour}
\emph{Unsolved
Problems in Group Theory. The Kourovka Notebook}, no.~17, Institute
of Mathematics, Novosibirsk, 2010.


\bibitem{wil83} J. Wilson, On the structure of compact torsion groups, {\it
Monatsh. Math.}, {\bf 96}, 57--66.

\bibitem{zel89} E. I. Zelmanov, On some problems of the theory of groups and Lie
algebras, \emph{Mat. Sbornik}, {\bf 180}, 159--167; English
transl., \emph{Math. USSR Sbornik}, {\bf 66} (1990), 159--168.


\bibitem{zel90} E. I. Zelmanov, A solution of the Restricted
Burnside Problem for
groups of odd exponent, \emph{Izv. Akad. Nauk SSSR Ser. Mat.}, {\bf
54}, 42--59; English transl., \emph{Math. USSR Izvestiya},
{\bf
36} (1991),
41--60.

\bibitem{zel91} E. I. Zelmanov,
 A solution of the Restricted
Burnside Problem for
2-groups, \emph{Mat. Sbornik}, {\bf 182}, 568--592; English transl., \emph{Math. USSR Sbornik}, {\bf 72}
(1992), 543--565.

\bibitem{zel92} E. I. Zelmanov, On periodic compact groups, \emph{ Israel J. Math.} {\bf 77}, no.~1--2 (1992), 83--95.


\end{thebibliography}
\end{document}